\numberwithin{equation}{section}
\newtheorem{theorem}{Theorem}[section]
\newtheorem{lemma}{Lemma}[section]
\newtheorem{corollary}{Corollary}[section]
\newtheorem{proposition}{Proposition}[section]
\theoremstyle{definition}
\newtheorem{definition}{Definition}[section]
\newtheorem{remark}{Remark}[section]
\newtheorem{example}{Example}[section]
\newtheorem{question}{Question}[section]
\theoremstyle{remark}
\date{}
\begin{document}

\title{Normal families of holomorphic mappings between complex Finsler manifolds}
\author{Jun Nie (jnie@ncu.edu.cn)\\
Department of Mathematics, Nanchang University\\ Nanchang,  330031, China\\
}

\date{}
\maketitle

\begin{abstract}
In this paper, we find that the integrated form $d_F$ of a complex Finsler metric $F$ is inner. The distance $d_F$ is complete if and only if every closed bounded subset of a complex manifold $M$ is compact. We prove a version of theorem for normal families of holomorphic mappings between two complex Finsler manifolds, i.e, the theorem of Montel in complex Finsler manifolds. Our results extend the basic theorem of strongly negatively curved families for a Hermitian manifold [Wu, Acta Math. 119(1967), 193-233] or [Grauert, Reckziegel, Math. Z. 89(1965), 108-125]. As applications, we obtain a complex Finsler version of theorems $A$-$F$ in [Wu, Acta Math. 119(1967), 193-233], including the Cartan-Carath\'eodory-Kaup-Wu theorem, the theorem of the automorphism group on a complex Finsler manifold and some rigid results.
\end{abstract}

\textbf{Keywords:}  Normal families; Complex Finsler metric; Holomorphic mappings; Holomorphic sectional curvature.

\textbf{Mathematics Subject Classification:}  53C56, 53C60, 32H02.

\section{Introduction and main results}
\noindent

Denote by $U$, $\mathcal{F} $ a domain in $\mathbb{C}^n$ and a family of holomorphic functions on $U$ respectively. $\mathcal{F}$ is called $\textit{uniformly bounded}$ if and only if there is a constant $C$ such that $\sup_{z\in U}||f(z)|| \leq C$ for all $f \in \mathcal{F}$. Here $||\cdot||$ denotes the ordinary Euclidean norm of $\mathbb{C}^n$. In one complex variable, the theorem of Montel \cite{KS} states a uniformly bounded family $\mathcal{F}$ of holomorphic function is normal. Subsequently, Theorem of Montel was extended to several complex variables. We now introduce a more generalized version of the theorem of Montel as follows.
\begin{theorem}(see \cite{WHS})
Let $M$ be a complex manifold. A uniformly bounded family of holomorphic mappings $\mathcal{F}$ from $M$ into $\mathbb{C}^n$ is equicontinuous and hence relatively compact in $\mathcal{H}(M,\mathbb{C}^n)$.
\end{theorem}
\begin{remark}
If $M$ is a domain in $\mathbb{C}^n$, by Lemma \ref{normal}, then a uniformly bounded family of holomorphic mappings $\mathcal{F}$ from $\mathcal{D}$ into $\mathbb{C}^n$ is normal \cite{KS}. This is the theorem of Montel in several complex variables.
\end{remark}

The theorem of Montel is crucial for the study of holomorphic functions, as it provides a powerful tool for understanding the behavior of sequences of such functions. It essentially says that if a family of holomorphic functions doesn't grow too large within a given domain, then it must be relatively compact in the space of all holomorphic functions on that domain, with respect to the topology of uniform convergence on compact sets. This theorem is a cornerstone in the proof of many fundamental results in complex analysis.

The notion has long proved its importance in the theory of holomorphic functions of one or several complex variable, but its study in the general setting of complex manifolds was begun in 1965 by Grauert and Reckziegel \cite{GR}. They considered the target domain is a complex manifold with a
complete differential metric (a generalized Hermitian metric, which is discussed in detail in \cite{GR}), such that the Gaussian curvature on every one-dimensional complex analytic surface in the target domain is $ \leq K < 0$ for some constant $K$. Then they obtain the corresponding theorem, i.e., Corollary \ref{tight}. In 1967, Wu obtained the main result in \cite{WHS}, which entails as a corollary the main theorem of Grauert-Reckziegel \cite{GR}, Satz 1, in the following.
\begin{theorem}\label{T-1.1}(see \cite{WHS})%\cite[p.217, Basic Theorem]{WHS}
A strongly negatively curved family $\mathcal{F}$ of holomorphic mappings from a complex manifold $M$ into a Hermitian manifold $N$ is equicontinuous. If $N$ is complete, then $\mathcal{F}$ is normal.
\end{theorem}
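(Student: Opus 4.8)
The plan is to reduce the statement to a Schwarz--Ahlfors type estimate for the complex Finsler metric $F$ and then feed that estimate into a localized Arzel\`a--Ascoli argument. Since $N$ is strongly negatively curved, its holomorphic sectional curvature is bounded above by a negative constant $-B$, $B>0$. The starting point is the generalized Schwarz lemma: for any holomorphic map $g$ from the unit disc $\Delta=\{z\in\mathbb{C}:|z|<1\}$, carrying its Poincar\'e metric $\rho$ of constant curvature $-B$, into $N$, one has $g^{*}F\le\rho$ pointwise on $T\Delta$. This is the Finsler analogue of the Ahlfors--Schwarz lemma; I would establish it (or invoke the version proved earlier in the paper) by the usual maximum-principle comparison, noting that the only curvature input required is the upper bound $-B$ along the directions actually hit by $g$, which is exactly what the hypothesis ``strongly negatively curved family'' supplies uniformly over $\mathcal{F}$.

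Granting this, I would prove equicontinuity as a purely local, uniform derivative bound. Fix $p\in M$ and a relatively compact coordinate polydisc $U\ni p$. There is a radius $r_{0}>0$, depending only on $U$, so that through every point $q\in U$ and in every complex tangent direction $v$ one can embed a holomorphic disc $\phi:\Delta\to M$ with $\phi(0)=q$ and $\phi'(0)$ a fixed multiple of $v$ of size at least $r_{0}$. Composing with any $f\in\mathcal{F}$ and applying the Schwarz estimate to $f\circ\phi$ at $z=0$, where the Poincar\'e metric is a universal constant, yields $F\bigl(f(q),\,df_{q}(v)\bigr)\le C$ with $C$ independent of $f$, $q$, and $v$. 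Integrating along paths in $U$ and using that $d_{F}$ is inner (as established earlier) gives the uniform Lipschitz bound $d_{F}\bigl(f(q_{1}),f(q_{2})\bigr)\le C\,|q_{1}-q_{2}|$ in the coordinates of $U$, for all $f\in\mathcal{F}$, which is precisely equicontinuity with respect to $d_{F}$.

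For normality under the completeness hypothesis, I would invoke the Arzel\`a--Ascoli theorem in the form suited to the compact-open topology. Take a sequence $\{f_{n}\}\subseteq\mathcal{F}$ and fix a base point $p_{0}\in M$. If $\{f_{n}(p_{0})\}$ stays within a bounded subset of $(N,d_{F})$, then by completeness---equivalently, since every closed bounded subset of $N$ is compact---together with the uniform Lipschitz bound, the orbits $\{f_{n}(q)\}$ remain in a fixed compact set for $q$ in any fixed compact subset of $M$; equicontinuity plus a diagonal extraction over a countable exhaustion then produces a subsequence converging uniformly on compacta to a map in $\mathcal{C}(M,N)$, which is holomorphic as a locally uniform limit of holomorphic maps. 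If instead $\{f_{n}(p_{0})\}$ eventually leaves every compact set, the same Lipschitz bound forces $f_{n}(q)\to\infty$ locally uniformly, so the subsequence diverges compactly. Either alternative is exactly what ``normal'' requires.

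The main obstacle I anticipate is the Finsler Schwarz--Ahlfors lemma itself: because $F$ is only a Finsler (not Hermitian) metric, $F^{2}$ is smooth merely on the complement of the zero section and its holomorphic sectional curvature is defined through the complex Finsler connection, so the maximum-principle comparison must be carried out with the correct curvature notion and with care at points where the relevant tangent vector degenerates. The second delicate point is the uniformity over the family: one must ensure that the curvature bound $-B$ and the embedded-disc radius $r_{0}$ are genuinely uniform, so that the constant $C$ in the Lipschitz estimate does not depend on the individual map $f$. Once these uniformities are secured, the passage from the local estimate to global equicontinuity and then to normality is routine, relying only on the innerness and completeness of $d_{F}$ recorded earlier.
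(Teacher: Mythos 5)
Your proposal is correct in substance, but it takes a genuinely different route from the paper's. The paper proves this statement via its Finsler generalization (Theorem \ref{basic}, which subsumes the Hermitian case): it fixes $p$-centered coordinates, endows the unit ball $B^n$ with its Bergman metric $h_b$ (complete K\"ahler, constant holomorphic sectional curvature $-1$, sectional curvature in $[-1,-\tfrac{1}{4}]$), and applies the Schwarz lemma for maps from complete K\"ahler manifolds (Theorem \ref{SL}) to get $f^*G \leq h_b$ for all $f \in \mathcal{F}$ in one stroke; the distance-decreasing inequality $d_G(f(p),f(q)) \leq d_{h_b}(p,q)$ and equicontinuity follow immediately, and normality comes from Theorem \ref{hopf} together with Wu's Ascoli-type lemma. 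You instead run the one-variable Ahlfors--Schwarz lemma on embedded affine discs through each point and direction, extract a uniform derivative bound, and integrate. Your route is more elementary (only the disc lemma is needed), is in fact closer to Wu's original 1967 argument, and uses the curvature hypothesis only along discs actually hit by the maps --- exactly Wu's ``strongly negatively curved family'' condition --- whereas the paper assumes a global curvature bound and pays for its shortcut with the heavier Nie--Zhong theorem, which also requires curvature bounds below on the source. Two small repairs: innerness of $d_F$ is not needed for your Lipschitz step ($d_F$ is by definition an infimum of lengths; innerness enters only via Theorem \ref{hopf} to make completeness yield ``closed bounded sets are compact'' for Ascoli), and your dichotomy at $p_0$ (``stays bounded'' versus ``eventually leaves every compact set'') is not exhaustive as stated --- pass first to a subsequence: either some subsequence of $\{f_n(p_0)\}$ lies in a compact set, or the full sequence diverges; with that routine fix the argument is complete.
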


Finsler geometry is Riemannian geometry without quadratic restrictions \cite{Ch}. However, complex Finsler geometry is Hermitian geometry without Hermitian-quadratic restrictions which contains Hermitian geometry as its special case. It is known that for any complex manifold, there are natural intrinsic pseudo-metrics, i.e., the Kobayashi and Carath$\acute{\mbox{e}}$odory pseudo-metrics.  In general, they are only complex Finsler metrics in nature which are not complex Finsler metrics in the sense of Abate and Patrizio (see \cite{abate}). By Theorem \ref{T-1.1}, Theorems $A$-$F$ are rewritten in Hermitian version in \cite{WHS}. The two main problems contemplated here can be briefly described as follows.
\begin{question}\label{Q-1}
What is the \textit{ basic theorem} of normal families of holomorphic mappings between complex Finsler manifolds?
\end{question}
\begin{question}\label{Q-2}
Can theorems $A$-$F$ be generalized to a complex Finsler manifold?
\end{question}

The Hopf-Rinow theorem and Schwarz lemma play an important role in the proof of the \textit{ basic theorem} i.e., Theorem \ref{T-1.1}.

Complex Finsler geometry is Hermitian geometry without Hermitian-quadratic restrictions. The complex Finsler metric is only a length function and not a real Finsler metric. Therefore, we do not directly apply the Hopf-Rinow theorem from Hermitian geometry or real Finsler geomoetry in complex Finsler geometry. Fortunately, according to \cite{kobayashi1}, we have the following theorem for a complex Finsler metric (or a length function) that holds true. Based on the proof of the\textit{ basic theorem} in \cite{WHS}, Theorem \ref{T-1.2} can complete the proof of Theorem \ref{T-1.3}.
\begin{theorem}\label{T-1.2}(cf. Theorem \ref{hopf})
Suppose that $(M,G=F^2)$ is a complex Finsler manifold. And let $ d_G$ be the integrated form of $F$. Then the following statements are equivalent:\\
$(1)$~~the distance $d_F$ is complete;\\
$(2)$~~closed bounded subsets of $M$ are compact.
\end{theorem}

In \cite{GR,WHS}, they considered the Gaussian curvature on every one-dimensional complex analytic surface in a Hermitian manifold $N$. By Propositions \ref{P-2.1}, \ref{P-2.3}, we only need to consider the holomorphic sectional curvature of the target manifold. For more details, we see Definitions \ref{D-4.7},
\ref{D-4.8}, \ref{D-4.9}, \ref{D-4.11}, \ref{D-4.12}.

From differential geometric viewpoints, the generalizations of Schwarz lemma date back to \cite{ahlfors,chen,tosatti,royden,yau2,Kobayashi67,Koranyi,chenya,La}, we refer to Kim-Lee \cite{KL} and references therein. The objects of these generalizations are Hermitian manifolds. Last decade, Schwarz lemmas are generalized to a complex Finsler metrics which are not necessary Hermitian quadric form. In 2013, Shen and Shen \cite{shen} got a Schwarz lemma from a compact complex Finsler manifold into a complex Finsler manifold. In 2019, Wan \cite{wan} obtained a Schwarz lemma from a complete Riemann surface into a complex Finsler manifold. In 2022, the author and Zhong \cite{NZ1,NZ2} proved a Schwarz lemma of holomorphic mappings from a complete K\"ahler manifold to a complex Finsler manifold and then established a Schwarz lemma on a strongly convex weakly K\"ahler-Finsler manifold, where the assumptions of the radial flag curvatures of the complex Finsler metric are required. Recently, Li, Qiu and Zhang obtain the Schwarz lemma between two complex Finsler manifolds \cite{LQZ}. In general, the Schwarz lemmas on a Hermitan manifold rely heavily on the Chern-Lu formula \cite{LuV}, but the Chern-Lu formula on complex Finsler manifolds fails. The generalization of Schwarz lemmas from a Hermitian manifold (or complex Finsler manifold) to a complex Finsler manifold without a Hermitian non-quadratic form metric is nontrivial. The Schwarz lemmas on a manifold with a quadratic form mostly applies the maximum principle to the source manifold, but it does not seem to apply to a complex Finsler manifold. This is because the complex Finsler metric is defined on the holomorphic tangent bundle or projective tangent bundle, not on the source manifold. For more details, we refer to \cite{Nie}.

Combining Theorem \ref{T-1.2} with Schwarz lemma, i.e., Theorem \ref{SL}, we generalize Theorem \ref{T-1.1} to a complex Finsler manifold and obtain the following theorem which answer the Question \ref{Q-1}.
\begin{theorem}\label{T-1.3}(cf. Theorem \ref{basic})
A strongly negatively curved family $\mathcal{F}$ of holomorphic mappings from a complex manifold $M$ into a complex Finsler manifold $(N,G=F^2)$ is equicontinuous. If $(N,G=F^2)$ is complete, then $\mathcal{F}$ is normal.
\end{theorem}

As a simple application of Theorem \ref{T-1.3}, we obtain the following corollary.
\begin{corollary}\label{C-1.1}(cf. Corollary \ref{tight})
If $N$ is a strongly negatively curved complex Finsler manifold, then it is tight. If $N$ is furthermore complete, then it is taut.
\end{corollary}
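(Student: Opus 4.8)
The plan is to read off the corollary directly from Theorem \ref{T-1.3}, the point being that the two hypotheses imposed on $N$ are precisely what upgrade the entire mapping space $\mathcal{H}(M,N)$ into a strongly negatively curved family, for every admissible source manifold $M$. Recall from Wu's terminology that $N$ is \emph{tight} if $\mathcal{H}(M,N)$ is equicontinuous (with respect to the integrated distance $d_G$) for every complex manifold $M$, and that $N$ is \emph{taut} if $\mathcal{H}(M,N)$ is a normal family for every such $M$. It therefore suffices to establish these two family-theoretic properties for an arbitrary but fixed source $M$.

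First I would fix a complex manifold $M$ and take $\mathcal{F} = \mathcal{H}(M,N)$ to be the full space of holomorphic mappings. The crucial observation is that the property of being strongly negatively curved is inherited from the target: by hypothesis $N$ carries a strongly pseudoconvex complex Finsler metric $G=F^2$ whose holomorphic sectional curvature is bounded above by a negative constant $-B$. Because this bound is intrinsic to $(N,G)$ and independent of the individual map, the single constant $-B$ serves simultaneously as the curvature bound for every $f \in \mathcal{H}(M,N)$. I would make this explicit by unwinding the definition of a strongly negatively curved family and checking that it depends only on the target geometry $(N,G)$, so that $\mathcal{F}=\mathcal{H}(M,N)$ automatically qualifies.

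Granting this identification, the conclusions are immediate. The first assertion of Theorem \ref{T-1.3}, applied to $\mathcal{F}=\mathcal{H}(M,N)$, shows that $\mathcal{H}(M,N)$ is equicontinuous; since $M$ was arbitrary this says exactly that $N$ is tight. If moreover $(N,G=F^2)$ is complete, the second assertion of Theorem \ref{T-1.3} shows that $\mathcal{H}(M,N)$ is normal for every $M$, which is the definition of $N$ being taut.

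The only delicate point---and the one I expect to be the main obstacle---is this definitional step: one must confirm that the negative curvature bound of $N$ genuinely promotes the whole space $\mathcal{H}(M,N)$ to a strongly negatively curved family governed by a single uniform constant, rather than merely guaranteeing that each individual map is negatively curved. Once the definition is seen to depend only on $(N,G)$ and not on $M$ or on the choice of maps, this uniformity is automatic and the corollary collapses to a direct invocation of Theorem \ref{T-1.3} with $\mathcal{F}=\mathcal{H}(M,N)$.
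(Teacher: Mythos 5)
Your proposal is correct and takes essentially the same route as the paper, whose entire proof is a one-line application of Theorem \ref{basic} together with Wu's Lemma 1.3 \cite[p. 197]{WHS}; the definitional bookkeeping you carry out by hand (tight $=$ $\mathcal{H}(M,N)$ equicontinuous for every $M$, taut $=$ $\mathcal{H}(M,N)$ normal for every $M$) is exactly what that citation supplies. Your flagged ``delicate point'' is indeed the crux and is resolved just as you say: in this paper's Definition of a strongly negatively curved family the condition depends only on the curvature bound of the target $(N,G=F^2)$, so $\mathcal{F}=\mathcal{H}(M,N)$ automatically qualifies with a single uniform constant for every source $M$.
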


There are many important applications of Theorem \ref{T-1.3} and Corollary \ref{C-1.1}. Generalizations of the unit disk Schwarz lemma to several complex variables originated from the work of Cartan \cite{cartan} and Carath\'eodory \cite{Caratheodory}. Look \cite{Lb} first studied the properties of the Jacobian matrix of holomorphic mappings in several complex variables, which extends the result of Cartan. In 1967, it was extended to Hermitian manifold by Wu \cite{WHS}. Since then, various generalizations were made to several complex variables. We refer to Kaup \cite{KP}, Eisenman \cite{ED}. It is well-known Cartan-Carath\'eodory-Kaup-Wu theorem. Note that recently the study of the Schwarz lemma at the boundary of various type of domains in $\mathbb{C}^n$ also attracts lots of interests, we refer to Burns-Krantz \cite{BK}, Liu-Tang \cite{lt2,lt4,lt5} ,Liu-Tang-Zhang \cite{lt3,lt6}, Wang-Liu-Tang \cite{wang}, etc. It is very natural to ask the question: \textbf{What is Cartan-Carath\'eodory-Kaup-Wu theorem on complex Finsler manifolds?} The following theorem answer this question.

\begin{theorem}\label{T-1.6}(cf. Theorem \ref{C})
Let $M$  be a complex manifold which is either an open relatively compact submanifold of a strongly negatively curved complex Finsler manifold $(N,G=F^2)$, or a complete strongly negatively curved complex Finsler manifold itself. Then for a holomorphic mapping $f: M \rightarrow M$ leaves a point $p$ fixed:\\
$(1)$ $|\det df_p|\leq 1$;\\
$(2)$ If $df_p:T_p^{1,0}M \rightarrow T^{1,0}_pM$ is the identity linear map, then $f$ is the identity mapping of $M \rightarrow M$.\\
$(3)$ $|\det df_p|=1$ if and only if $f$ is an automorphism of $M$.
\end{theorem}

Now, we give a specific example as following example.
\begin{example}
Suppose that $\pi: \mathcal{X} \rightarrow S$ is an effectively parametrized holomorphic family of compact canonically polarized complex manifolds over a complex manifold $S$. And let $S_1$ be a relatively compact open submanifold of a complex manifold $S$. Then for a holomorphic mapping $f: S_1 \rightarrow S_1$ leaves a point $p$ fixed:\\
$(1)$ $|\det df_p|\leq 1$;\\
$(2)$ If $df_p:T_p^{1,0}S_1 \rightarrow T^{1,0}_pS_1$ is the identity linear map, then $f$ is the identity mapping of $S_1 \rightarrow S_1$.\\
$(3)$ $|\det df_p|=1$ if and only if $f$ is an automorphism of $S_1$.
 \end{example}

By Theorem \ref{T-1.3} and Theorem \ref{Wu-1}, we have the following theorem.

\begin{theorem}\label{T-1.4}(cf. Theorem \ref{A})
Let $M$ be a complex manifold. And let $(N,G)$ be a strongly negatively curved complex Finsler manifold with the volume element $\Omega$ and $\dim M=\dim N=n$. Let $\mathcal{F}_a: M \rightarrow N$ be a family of holomorphic mappings with the following properties:\\
$(1)$~~At a fixed point $p_0$ of $M$, $|f^*\Omega/\mu| \geq a >0$ for all $f \in \mathcal{F}_a$ and $\mu$ is a fixed nonzero real vector at $p_0$ of degree $2n$.\\
$(2)$~~$p_0$ gets carried by each $f \in \mathcal{F}_a$ into some fixed compact set $K$ in $N$.\\
Then there is a positive constant $\alpha$ such that each $f \in \mathcal{F}_a$ possesses a univalent ball of radius $\alpha$ around $f(p_0)$.
\end{theorem}

 If a complex manifold $M$ is a Hermitian manifold, by Theorem \ref{Wu-2} and \ref{T-1.4}, we have the following theorem.
\begin{theorem}\label{T-1.5}(cf. Theorem \ref{B})
Let $M$ be a Hermitian manifold.  And let $(N,G)$ be a strongly negatively curved complex Finsler manifold with the volume element $\Omega$ and $\dim M=\dim N=n$. Let $\mathcal{F}_a: M \rightarrow N$ be a family of holomorphic mappings with the following properties:\\
$(1)$~~At a fixed point $p_0$ of $M$, $|f^*\Omega/\mu| \geq a >0$ for all $f \in \mathcal{F}_a$ and $\mu$ is a fixed nonzero real vector at $p_0$ of degree $2n$.\\
$(2)$~~$p_0$ gets carried by each $f \in \mathcal{F}_a$ into some fixed compact set $K$ in $N$.\\
Then there is a positive constant $\lambda$ such that each $f \in \mathcal{F}_a$ is biholomorphic on an open ball of radius $\lambda$ around $p_0$.
\end{theorem}

The automorphism group on a complex manifold is an important notion. There are several documents about an automorphism group on a bounded domain or Hermitian manifold. We can refer to \cite{cartan,cartan1,KP,Caratheodory,CK1,kobayashi2} and references therein. Of course, \textbf{We wondered if the automorphism group on a complex Finsler manifold is a Lie groups?} The following theorem partially affirms this question.

\begin{theorem}\label{T-1.7}(cf. Theorem \ref{D})
If a complex Finsler manifold $N$ is strongly negatively curved, then its automorphism $H(N)$ is a (not necessarily connected) Lie group and the isotropy group of $H(N)$ at a point is a compact Lie group. If $N$ is compact, then $H(N)$ is finite. No complex Lie transformation group of positive dimension acts nontrivially on $N$.
\end{theorem}

 We know that there are no nonconstant bounded entire functions. Naturally, we want to know whether there are no constant holomorphic mappings between $\mathbb{C}^n$ and a complex Finsler manifold. Here is a negative answer as follows.
\begin{theorem}\label{T-1.8}(cf. Theorem \ref{E})
Every holomorphic mappings from $\mathbb{C}^n$ into a strongly negatively curved complex Finsler manifold $N$ reduces to a constant.
\end{theorem}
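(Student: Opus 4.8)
The plan is to reduce the statement to the one--variable Liouville theorem and then invoke the generalized Schwarz lemma (Theorem \ref{SL}) in the guise of an Ahlfors--Schwarz comparison on disks of growing radius. Let $f:\mathbb{C}^n\to N$ be holomorphic, where $N$ is strongly negatively curved, so that its holomorphic sectional curvature is bounded above by a negative constant $-A<0$, and recall that strong pseudoconvexity of $(N,G=F^2)$ makes $F$ nondegenerate, i.e.\ $F(q,v)>0$ for every nonzero $v$. Fix $p\in\mathbb{C}^n$. For each complex line $\ell=\{p+\zeta v:\zeta\in\mathbb{C}\}$ through $p$ (with $v\in\mathbb{C}^n\setminus\{0\}$), the restriction $g:=f|_\ell:\mathbb{C}\to N$ is holomorphic. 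Since $\mathbb{C}^n$ is the union of all such lines and is connected, it suffices to show that every holomorphic map $g:\mathbb{C}\to N$ is constant; this then forces $f\equiv f(p)$.

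First I would fix $R>0$ and consider $g$ restricted to the disk $D_R=\{z\in\mathbb{C}:|z|<R\}$, equipped with the Poincar\'e metric
$$ds_R^2=\frac{4R^2}{A\,(R^2-|z|^2)^2}\,|dz|^2$$
of constant holomorphic sectional curvature $-A$. Because the holomorphic sectional curvature of $N$ is $\le -A$, the comparison metric on $D_R$ is normalized so that the curvature of domain and target match, and the generalized Schwarz lemma for strongly pseudoconvex complex Finsler metrics (Theorem \ref{SL}) applies to $g:(D_R,ds_R)\to(N,F)$ with distance--decreasing constant $1$, yielding
$$F\bigl(g(z),dg_z(w)\bigr)\le ds_R(z)(w)\qquad\text{for all } z\in D_R,\ w\in T_zD_R.$$

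Finally I would fix $z_0\in\mathbb{C}$ together with a tangent vector $w$, and let $R\to\infty$. At the fixed point $z_0$ the right--hand side is
$$ds_R(z_0)(w)=\frac{2R}{\sqrt{A}\,(R^2-|z_0|^2)}\,|w|\longrightarrow 0,$$
so $F\bigl(g(z_0),dg_{z_0}(w)\bigr)=0$. By nondegeneracy of the strongly pseudoconvex metric $F$ this forces $dg_{z_0}(w)=0$; as $z_0$ and $w$ are arbitrary, $dg\equiv 0$ and $g$ is constant. Running this over all lines through $p$ gives $f\equiv f(p)$, as claimed.

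I expect the main obstacle to be the correct invocation of the Schwarz lemma in the Finsler (non--Hermitian--quadratic) setting: one must check that the hypothesis \emph{strongly negatively curved} supplies exactly the upper bound $-A$ on holomorphic sectional curvature demanded by Theorem \ref{SL}, and that the comparison metric on $D_R$ is normalized so that its curvature equals $-A$, so that the Schwarz estimate carries a constant independent of $R$ rather than an $R$--dependent factor. Once this $R$--independence is secured, the Liouville limit $R\to\infty$ proceeds verbatim, and the nondegeneracy of $F$ coming from strong pseudoconvexity is what converts the vanishing of the length $F(g(z_0),dg_{z_0}(w))$ into the vanishing of the vector $dg_{z_0}(w)$ itself.
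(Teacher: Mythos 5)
Your proof is correct, but it takes a genuinely different route from the paper. The paper deduces Theorem \ref{E} in one line from its normal-families machinery: Corollary \ref{tight} shows that a strongly negatively curved, strongly pseudoconvex complex Finsler manifold $N$ is tight (this rests on the basic theorem, Theorem \ref{basic}), and then Wu's Theorem E for tight manifolds (\cite[p. 211]{WHS}) asserts that every holomorphic map of $\mathbb{C}^n$ into a tight manifold is constant. You instead give a direct Ahlfors--Schwarz/Liouville argument: restrict $f$ to complex lines through a point, equip $D_R$ with the Poincar\'e metric scaled to curvature $-A$, apply Theorem \ref{SL} with $K_1=K_2=-A$ (the hypotheses do hold: the Poincar\'e disk is complete K\"ahler with constant curvature, so both curvature lower bounds are satisfied, and the ratio $K_1/K_2=1$ is independent of $R$), and let $R\to\infty$ so that the Poincar\'e density at a fixed point tends to $0$; nondegeneracy of $F$ --- which, strictly speaking, is part of the definition of a complex Finsler (as opposed to pseudo-Finsler) metric rather than a consequence of strong pseudoconvexity --- then kills $dg$. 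What each approach buys: the paper's proof is shorter on the page but imports both the equicontinuity theorem and Wu's tight-manifold theorem, whereas yours is self-contained modulo Theorem \ref{SL} alone and makes the curvature mechanism explicit. You could in fact streamline your argument further: Theorem \ref{SL} as stated already contains the conclusion --- take $M=\mathbb{C}^n$ with the flat Euclidean metric, which is complete K\"ahler with holomorphic sectional curvature and sectional curvature bounded below by $K_1=0$, and the ``in particular'' clause with $K_1\ge 0$ gives constancy of $f$ immediately, with no need for lines or the exhaustion by disks $D_R$ (your exhaustion essentially re-derives that clause in dimension one).
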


 In Hermitian geometry, Kobayashi \cite{Kobayashi67} proved that $\mathbb{C}^n$ does not admit a Hermitian metric whose holomorphic sectional curvature is negative and bounded away from zero. Complex Finsler geometry is Hermitian geometry without Hermitian quadratic restrictions. There is naturally a question: \textbf{Does $\mathbb{C}^n$  admit a complex Finsler metric whose holomorphic sectional curvature is negative and bounded away from zero?}
The answer is No as follows.
\begin{corollary}\label{C-1.2} (cf. Corollary \ref{C-5.2})
$\mathbb{C}^n$ cannot be equipped with a complex Finsler metric whose holomorphic sectional curvature is bounded above by a negative constant $-K_0<0$.
\end{corollary}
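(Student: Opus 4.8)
The plan is to argue by contradiction, reducing the statement directly to Theorem \ref{T-1.8} (Theorem \ref{E}) via the identity map. Suppose, contrary to the claim, that $\mathbb{C}^n$ could be equipped with a strongly pseudoconvex complex Finsler metric $F$ whose holomorphic sectional curvature is bounded above by a negative constant $-K_1<0$. The first step is to observe that this hypothesis is precisely the defining condition for $(\mathbb{C}^n, G=F^2)$ to be a \emph{strongly negatively curved and strongly pseudoconvex} complex Finsler manifold, so the target of Theorem \ref{T-1.8} is met by $(\mathbb{C}^n, F)$ itself.

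Next I would consider the identity map $\mathrm{id}\colon \mathbb{C}^n \rightarrow (\mathbb{C}^n, G=F^2)$, viewing the source as $\mathbb{C}^n$ with its standard complex structure and the target as the same underlying manifold carrying the hypothetical metric $F$. This map is manifestly holomorphic, so it is an element of $\mathcal{H}(\mathbb{C}^n, N)$ with $N=(\mathbb{C}^n,F)$. Applying Theorem \ref{T-1.8} to this map forces it to reduce to a constant. Since the identity map on $\mathbb{C}^n$ is plainly nonconstant for $n\geq 1$, this is a contradiction, and hence no such metric $F$ can exist.

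I do not expect any genuine obstacle here, as the corollary is an immediate specialization of Theorem \ref{T-1.8}; the only point requiring a line of justification is the bookkeeping in the first step, namely confirming that ``holomorphic sectional curvature bounded above by $-K_1<0$'' together with strong pseudoconvexity is exactly what the paper means by a strongly negatively curved and strongly pseudoconvex complex Finsler manifold, so that Theorem \ref{T-1.8} genuinely applies with $\mathbb{C}^n$ playing both the role of source and (with the new metric) the role of target.
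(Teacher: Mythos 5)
Your proposal is correct and is exactly the paper's (implicit) argument: the paper derives Corollary \ref{C-5.2} as an immediate consequence of Theorem \ref{E}, and the standard way to make that deduction explicit is precisely your identity-map contradiction, noting that Theorem \ref{E} carries no completeness hypothesis, so it applies to $(\mathbb{C}^n, G=F^2)$ as stated. Your bookkeeping remark is also right: the curvature bound plus strong pseudoconvexity is verbatim the paper's definition of a strongly negatively curved, strongly pseudoconvex complex Finsler manifold.
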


Finally, we generalize Theorem \ref{Wu-6} to a  complex Finsler manifold. Hence, we have the following theorem.
\begin{theorem}\label{T-1.9}(cf. Theorem \ref{F})
If a domain $E$ in $\mathbb{C}^n$ can be given a complete strongly negatively curved complex Finsler metric, then it is a domain of holomorphy.
\end{theorem}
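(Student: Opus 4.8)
The plan is to deduce the statement from two facts: that the hypotheses make $E$ a taut manifold, and that a taut domain in $\mathbb{C}^n$ satisfies the continuity principle, hence is pseudoconvex and therefore a domain of holomorphy by the solution of the Levi problem.

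First I would apply Corollary \ref{C-1.1}. Since the complex Finsler metric given on $E$ is strongly negatively curved, strongly pseudoconvex, and complete, that corollary asserts that $E$ is taut. In particular the family $\mathcal{H}(\Delta,E)$ of holomorphic discs $\varphi\colon\Delta\to E$, with $\Delta$ the unit disc in $\mathbb{C}$, is normal: every sequence in it admits a subsequence which either converges locally uniformly to some $\varphi\in\mathcal{H}(\Delta,E)$, necessarily with $\varphi(\Delta)\subseteq E$, or else is compactly divergent, i.e. eventually leaves every compact subset of $E$. This dichotomy, and especially the fact that the convergent alternative lands in the \emph{open} set $E$, is the feature of tautness I will exploit.

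Next I would prove that $E$ is pseudoconvex by verifying the continuity principle (Kontinuit\"atssatz), arguing by contradiction. If $E$ were not pseudoconvex there would be a continuous family of analytic discs $\varphi_t\colon\overline{\Delta}\to\mathbb{C}^n$, $t\in[0,1]$, each holomorphic on $\Delta$ and continuous on $\overline{\Delta}$, with all boundary circles $\varphi_t(\partial\Delta)$ contained in a fixed compact set $K\Subset E$, with $\varphi_0(\overline{\Delta})\subseteq E$, yet $\varphi_{t^*}(\overline{\Delta})\not\subseteq E$ for some $t^*$. Set $T=\sup\{s:\varphi_t(\overline{\Delta})\subseteq E \text{ for all } t\le s\}$; then $\varphi_t(\overline{\Delta})\subseteq E$ for $t<T$, while, since $E$ is open and the family is continuous, $\varphi_T(\overline{\Delta})$ must meet $\partial E$. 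As $\varphi_T(\partial\Delta)\subseteq K\subseteq E$, this contact occurs at an interior point $\varphi_T(z_0)$ with $z_0\in\Delta$. Now take $t_j\uparrow T$. Because $\varphi_T(\partial\Delta)\subseteq E$, continuity yields a circle $\{|z|=r\}\Subset\Delta$ with $\varphi_T(\{|z|=r\})\Subset E$, so the maps $\varphi_{t_j}$ cannot be compactly divergent; hence by tautness a subsequence converges locally uniformly to some $\psi\in\mathcal{H}(\Delta,E)$, whence $\psi(\Delta)\subseteq E$. Continuity of the family forces $\psi=\varphi_T$ on $\Delta$, so $\varphi_T(\Delta)\subseteq E$, contradicting $\varphi_T(z_0)\in\partial E$. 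Therefore $E$ is pseudoconvex, and by the solution of the Levi problem (Oka--Bremermann--Norguet) a pseudoconvex domain in $\mathbb{C}^n$ is a domain of holomorphy, which completes the proof.

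I expect the main obstacle to be the second step, the continuity-principle argument: one must arrange the family of analytic discs so that the two alternatives in the definition of tautness are genuinely exhausted. The delicate point is that the convergent alternative produces a limit whose image lies in the open set $E$, and so cannot accommodate a disc touching $\partial E$, while the compactly-divergent alternative is excluded precisely by keeping the boundary circles inside the fixed compact $K$; reconciling these two exclusions at the critical parameter $T$ is what forces the contradiction.
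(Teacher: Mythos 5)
Your proposal is correct and follows the same route as the paper: Corollary \ref{tight} (i.e.\ Corollary \ref{C-1.1}) converts the complete, strongly negatively curved, strongly pseudoconvex hypothesis into tautness of $E$, after which one concludes that a taut domain in $\mathbb{C}^n$ is a domain of holomorphy. The only difference is that the paper disposes of this second step by citing Wu's Theorem F, whereas you prove it outright via the Kontinuit\"atssatz and the solution of the Levi problem --- and your disc-family argument (boundaries pinned in a compact $K\Subset E$ ruling out compact divergence, tautness forcing the limit disc at the critical parameter $T$ into the open set $E$) is essentially Wu's own proof of the cited theorem, so the mathematical content coincides.
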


In this way, we have answered Question \ref{Q-2} using Theorems \ref{T-1.6}, \ref{T-1.4}, \ref{T-1.5}, \ref{T-1.7}, \ref{T-1.8},\ref{T-1.9}.
\section{Preliminaries for complex Finsler geometry}
\noindent

In this section, we recall the following basic definitions and facts on a complex Finsler manifold. For more details, we refer to \cite{abate,kobayashi1}.

Let $M$ be a complex manifold of complex dimension $n$. Let $\{z^1,\cdots,z^n\}$ be a set of local complex coordinates, and let $\{\frac{\partial}{\partial z^{\alpha}}\}_{1 \leq \alpha \leq n}$ be the corresponding natural frame of $T^{1,0}M$. So that any non-zero element in $\widetilde{M}=T^{1,0}M \setminus \{\text{zero section}\}$ can be written as
$$v=v^{\alpha}\frac{\partial}{\partial z^{\alpha}} \in \widetilde{M},$$
where we adopt the summation convention of Einstein. In this way, one gets a local coordinate system on the complex manifold $\widetilde{M}$:
$$(z;v)=(z^1,\cdots,z^n;v^1,\cdots,v^n).$$
\begin{definition}(see \cite{abate})
%\cite[p. 84, Definition 2.3.1]{abate}
A complex Finsler metric $G:=F^2$ on a complex manifold $M$ is a continuous function $G:T^{1,0}M \rightarrow [0,+ \infty)$ satisfying\\
(i) $G$ is smooth on $\widetilde{M}:=T^{1,0}M\setminus\{\mbox{zero section}\}$;\\
(ii) $G(z;v) \geq 0$ for all $v \in T_z^{1,0}M$ with $z \in M$, and $G(z;v)=0$ if and only if $v=0$;\\
(iii) $G(z;\zeta v)=|\zeta|^2G(z;v)$ for all $v \in T^{1,0}_zM$ and $\zeta \in \mathbb{C}$.
\end{definition}

\begin{definition}(see \cite{abate})
A complex Finsler metric $G$ is called strongly pseudoconvex if the Levi matrix
\begin{equation*}
(G_{\alpha \overline{\beta}})= \Big(\frac{\partial^2 G}{\partial v^{\alpha}\partial \overline{v}^{\beta}}\Big)
\end{equation*}
is positive definte on $\widetilde{M}$.
\end{definition}
A manifold $M$ endowed with a complex strongly pseudoconvex Finsler metric $G$ is called a complex Finsler manifold in this paper.
Any $C^\infty$ Hermitian metric on a complex manifold $M$ is naturally a strongly pseudoconvex complex Finsler metric. Conversely,
if a complex Finsler metric $G$ on a complex manifold $M$ is $C^\infty$ over the whole holomorphic tangent bundle $T^{1,0}M$, then it is necessary a $C^\infty$ Hermitian metric. That is, for any $(z;v)\in T^{1,0}M$
$$
G(z;v)=g_{\alpha\overline{\beta}}(z)v^\alpha\overline{v}^\beta
$$
for a $C^\infty$ Hermitian tensor $g_{\alpha\overline{\beta}}$ on $M$.
For this reason, in general the non-trivial (non-Hermitian quadratic) examples (see \cite{ZCP}) of complex Finsler metrics are only required to be smooth over the slit holomorphic tangent bundle $\widetilde{M}$.

Let $(M,G)$ be a complex Finsler manifold $M$, and take $v \in \widetilde{M}$. Then the holomorphic sectional curvature $K_G(v)$ of $G$ along a non-zero tangent vector $v$ is given by
\begin{equation*}
K_G(v)=\frac{2}{G(v)^2}\langle\Omega(\chi,\bar{\chi})\chi,\chi\rangle_v.
\end{equation*}
where $\chi=v^\alpha\delta_\alpha$ is the complex radial horizontal vector field and $\Omega$ is the curvature tensor of the Chern-Finsler connection associated to $(M,G)$.

Abate and Patrizio found a phenomenon that the holomorphic sectional curvature sectional curvature of a strongly pseudoconvex complex Finsler metric $G$ is the supremum of the Gaussian curvature of the induced metric through a family of holomorphic mappings.

\begin{proposition}\label{P-2.1} (see \cite{abate,wong}) %\cite[p. 110, Corollary 2.5.4]{abate} or \cite[p. 416, Theorem 1.2]{wong}
Suppose that $G:T^{1,0}M \rightarrow [0,+\infty)$ a strongly pseudoconvex complex Finsler metric on a complex manifold $M$, take $z \in M$ and $0 \neq v \in T_z^{1,0}M$. Then
$$K_G(v)=\sup\{K(\varphi^*G)(0)\},$$
where the supremum is taken with respect to the family of all holomorphic mappings $\varphi: \Delta \rightarrow M$ with $\varphi(0)=z$ and $\varphi'(0)=\lambda v$ for some $\lambda \in \mathbb{C}^*$; $K(\varphi^*G)(0)$ is the Gaussian curvature of $(\Delta,\varphi^*G)$ at the point $0$.
\end{proposition}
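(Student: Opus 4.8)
The plan is to prove the two inequalities separately by computing the Gaussian curvature of $\varphi^{*}G$ from a second-order Taylor expansion of $\varphi$ and comparing it against the intrinsic holomorphic sectional curvature $K_{G}(v)$, which I take to be defined through the horizontal curvature of the Chern--Finsler connection contracted along the tautological (radial) section. Fix local coordinates centered at $z$, let $\zeta$ be the coordinate on $\mathbb{D}$, and for a holomorphic $\varphi:\mathbb{D}\to M$ with $\varphi(0)=z$ and $\varphi'(0)=\lambda v$ write the pulled-back metric as $\varphi^{*}G=h(\zeta)\,|d\zeta|^{2}$ with coefficient $h(\zeta)=G(\varphi(\zeta);\varphi'(\zeta))$, where $\varphi'(\zeta)=d\varphi(\partial_{\zeta})\in T^{1,0}_{\varphi(\zeta)}M$. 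Then
$$K(\varphi^{*}G)(0)=-\frac{2}{h}\,\partial_{\zeta}\partial_{\bar\zeta}\log h\Big|_{\zeta=0},$$
with the constant normalized exactly as in the definition of $K_{G}$. Everything thus reduces to evaluating $h$, $\partial_{\zeta}h$ and $\partial_{\zeta}\partial_{\bar\zeta}h$ at the origin.

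First I would expand $\varphi(\zeta)=z+\zeta w+\tfrac12\zeta^{2}a+o(\zeta^{2})$ with $w=\lambda v$ and $a=\varphi''(0)$, so that $\varphi'(\zeta)=w+\zeta a+o(\zeta)$, and substitute into $h(\zeta)=G(\varphi(\zeta);\varphi'(\zeta))$, expanding $G$ to second order at $(z;w)$. This generates $G$ together with its pure and mixed $(z,v)$-derivatives evaluated at $(z;w)$. At this stage I would repeatedly invoke the $(1,1)$-homogeneity of $G$ and the resulting Euler identities $G_{\alpha}v^{\alpha}=G$, $G_{\alpha\bar\beta}v^{\alpha}\bar v^{\beta}=G$, $G_{\alpha\beta}v^{\beta}=0$ and their conjugates to collapse the expansion. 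The goal is to reach an identity of the form
$$K(\varphi^{*}G)(0)=K_{G}(v)-\frac{2}{G(z;v)^{2}}\,Q(a),$$
where the term $K_{G}(v)$ isolates exactly the curvature contribution depending only on $(z,v)$, and $Q(a)\geq 0$ is a nonnegative Hermitian-quadratic expression in the second-jet datum $a$ — a perfect square measuring the failure of $\varphi$ to be a complex geodesic to second order.

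The strong pseudoconvexity hypothesis enters precisely here: positive definiteness of the Levi matrix $(G_{\alpha\bar\beta})$ forces $Q(a)\geq 0$ for every $a$, giving $K(\varphi^{*}G)(0)\leq K_{G}(v)$ and hence $\sup\leq K_{G}(v)$. For the reverse inequality I would exhibit a disc attaining equality: positive definiteness lets me solve $Q(a)=0$ for the unique optimal $a$ (the complex-geodesic second jet), and then the polynomial $\varphi(\zeta)=z+\lambda\zeta v+\tfrac12\zeta^{2}a$ — after shrinking $\mathbb{D}$ so its image stays in the chart — is holomorphic, satisfies $\varphi(0)=z$ and $\varphi'(0)=\lambda v$, and realizes $K(\varphi^{*}G)(0)=K_{G}(v)$. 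Since $K_{G}(\lambda v)=K_{G}(v)$ by homogeneity, the value is independent of $\lambda$, so the supremum over all admissible discs equals $K_{G}(v)$. The hardest part will be the bookkeeping of the genuinely Finsler terms: in the Hermitian case $G=h_{\alpha\bar\beta}v^{\alpha}\bar v^{\beta}$ one has $G_{\alpha\beta}=0$ and $G_{\alpha\bar\beta}$ independent of $v$, which kills most of the expansion, whereas for a non-quadratic $G$ these derivatives survive; the crux is to verify, using only the homogeneity relations, that they reassemble into the clean split $K_{G}(v)-\tfrac{2}{G^{2}}Q(a)$ with $Q\geq 0$, and that the isolated $(z,v)$-part coincides with the adopted definition of $K_{G}(v)$.
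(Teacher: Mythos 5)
The paper gives no proof of this proposition at all --- it is quoted verbatim from Abate--Patrizio \cite[Corollary 2.5.4]{abate} and Wong--Wu \cite[Theorem 1.2]{wong} --- and your argument (expand $h(\zeta)=G(\varphi(\zeta);\varphi'(\zeta))$ to second order, use the Euler identities $G_\alpha v^\alpha=G$, $G_{\alpha\beta}v^\beta=0$, $G_{\alpha\bar\beta}\bar v^\beta=G_\alpha$ to split $K(\varphi^*G)(0)=K_G(v)-\tfrac{2}{G^2}Q(\varphi''(0))$ with $Q\geq 0$, then kill $Q$ with the right second jet) is exactly the computation in those cited sources, so your route coincides with the proof the paper relies on. One small correction: $Q$ is only positive \emph{semi}-definite, degenerating precisely along the direction $w=\lambda v$ (since $G_{\alpha\bar\beta}\bar w^\beta=G_\alpha$ forces $G_{\alpha\bar\beta}a^\alpha\bar a^\beta-G^{-1}|G_\alpha a^\alpha|^2$ to vanish at $a=w$, reflecting reparametrization freedom of the disc), so the optimal second jet is unique only modulo multiples of $w$; existence --- which is all you need, and which holds because the linear-in-$a$ term also vanishes on this kernel --- goes through, and the shrink-and-rescale step at the end is legitimate exactly because $\lambda\in\mathbb{C}^*$ is left free in the statement.
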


Kobayashi proved the proposition about the holomorphic sectional curvature of a complex Finsler metric as follows.
\begin{proposition} (see\cite{kobayashi})%\cite[p. 164, Theorem 6.1]{kobayashi}
Let $E$ be a holomorphic vector bundle with a strongly pseudoconvex complex Finsler structure $F$. Let $E'$ be a subbundle of $E$ and $G'$ the restriction of $G$ to $E'$. Then $G'$ is also convex, and its curvature does not exceed the curvature of $F$.
\end{proposition}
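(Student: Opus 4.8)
The plan is to prove the two assertions separately: strong pseudoconvexity (``convexity'') of $G'$ is pure linear algebra, while the curvature bound follows from the variational (supremum) description of holomorphic sectional curvature supplied by Proposition~\ref{P-2.1}.

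First I would work locally. Fix $z\in M$ and choose a holomorphic frame $\{e_1,\dots,e_n\}$ of $E$ near $z$ adapted to the subbundle, so that $\{e_1,\dots,e_r\}$ frames $E'$. Writing a fibre vector as $v=v^\alpha e_\alpha$ and splitting the fibre coordinates as $(v',v'')=(v^1,\dots,v^r;\,v^{r+1},\dots,v^n)$, the restricted metric is $G'(z;v')=G(z;(v',0))$, so that
$$G'_{i\bar j}(z;v')=\frac{\partial^2 G'}{\partial v^i\,\partial\bar v^j}=G_{i\bar j}(z;(v',0)),\qquad 1\le i,j\le r.$$
Thus the Levi matrix of $G'$ is precisely the leading $r\times r$ principal block of $(G_{\alpha\bar\beta})$, evaluated along $v''=0$. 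Since $(G_{\alpha\bar\beta})$ is positive definite on the complement of the zero section of $E$ — in particular at every $(v',0)$ with $v'\neq 0$ — each of its principal submatrices is positive definite; hence $(G'_{i\bar j})$ is positive definite and $G'$ is strongly pseudoconvex. (That $G'$ is a genuine Finsler metric, homogeneous and vanishing only on the zero section, is inherited immediately from $G$.) This disposes of the convexity claim.

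For the curvature bound the point is that passing to $E'$ only shrinks the family of competitors in the variational characterisation of curvature. By the vector-bundle form of Proposition~\ref{P-2.1}, the holomorphic sectional curvature of $G$ at a nonzero $v\in E_z$ is realised as a supremum $K_G(v)=\sup K(\varphi^{*}G)(0)$ of Gaussian curvatures of the metrics induced by $G$ on the admissible holomorphic disks through $(z,v)$. Now fix $0\neq v\in E'_z$. Because $E'\hookrightarrow E$ is a \emph{holomorphic} subbundle, every admissible disk for $G'$ is also an admissible disk for $G$, and along such a disk the two induced metrics coincide, since $G'=G|_{E'}$ and the competing lift stays inside $E'$. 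Hence the supremum defining $K_{G'}(v)$ is taken over a subfamily of the one defining $K_G(v)$, and therefore
$$K_{G'}(v)\le K_G(v)\qquad\text{for all }0\neq v\in E',$$
which is exactly the assertion that the curvature of $G'$ does not exceed that of $F$.

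The main obstacle is to make the variational description rigorous in the bundle setting so that the inclusion of competitor families is literally correct: one must check that a holomorphic section of $E'$ along a disk is still a legitimate competitor for $E$ (immediate from holomorphicity of $E'\hookrightarrow E$) and that no competitor is lost by confining the lift to $E'$ rather than letting it stray into a complement. Should one instead want a computational proof, the difficulty migrates to a Gauss--Codazzi identity: split $E=E'\oplus (E')^{\perp}$ smoothly, express the Chern--Finsler curvature of $G'$ as the restriction of that of $G$ minus a second-fundamental-form term, and verify that this term enters with the correct non-positive sign. The genuinely Finslerian complication there is that the coefficients $G_{\alpha\bar\beta}$ depend on the fibre direction $v$, so both the orthogonal splitting and the second fundamental form become $v$-dependent and the sign analysis is more delicate than in the Hermitian case — which is why the supremum argument above, bypassing all of this, is the route I would take.
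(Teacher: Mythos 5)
Your proposal is correct in substance, but note first that the paper contains no proof of this proposition at all: the statement is quoted verbatim from Kobayashi \cite[p.~164, Theorem 6.1]{kobayashi} (the mixture of $F$, $G$, $G'$ in the statement is inherited from that quotation), so the relevant comparison is with Kobayashi's original argument. Your first half --- that $G'$ is strongly pseudoconvex because its Levi matrix is the leading $r\times r$ principal block of $(G_{\alpha\bar\beta})$ evaluated at points $(v',0)\in E'\setminus\{0\}\subset E\setminus\{0\}$, where the full matrix is positive definite --- is complete, and is what any proof does. For the curvature half you take the variational route, whereas Kobayashi's route is essentially the computational one you sketch as the alternative: the curvature of a Finsler structure on a general bundle $E$ is \emph{defined} through the Hermitian metric that $F$ induces on the tautological (pulled-back) bundle over the projectivization $P(E)$; since $P(E')\subset P(E)$ and the induced metric for $F'$ is the restriction of that for $F$, the claim reduces to the classical Hermitian fact that curvature decreases on holomorphic subbundles, with the second fundamental form entering with the favorable sign. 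That reduction works directly from the definition and needs no disk characterization; your route buys a softer, coordinate-free argument that avoids the $v$-dependent second fundamental form entirely.

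The one genuine soft spot in your write-up is the appeal to a ``vector-bundle form of Proposition \ref{P-2.1}'': the paper states Proposition \ref{P-2.1} only for Finsler metrics on $T^{1,0}M$, and in the general bundle setting the sup-over-holomorphic-lifts characterization of Kobayashi's curvature is itself a theorem of roughly the same depth as the monotonicity you want (one needs both an Ahlfors--Schwarz-type upper bound for every lift and the construction of near-extremal lifts with prescribed $1$-jets). So as written your argument is complete only for $E=T^{1,0}M$. Happily, that is exactly the case in which the paper uses the proposition (Proposition \ref{P-2.3}: a submanifold $M'\subset M$ with the induced metric), where every disk $\varphi:\mathbb{D}\to M'$ with $\varphi'(0)=\lambda v$ is also a competitor for $M$ and satisfies $\varphi^*G'=\varphi^*G$, so $K_{G'}(v)\le K_G(v)$ follows verbatim from Proposition \ref{P-2.1}. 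For the proposition in its full bundle generality you should either prove the bundle version of the variational characterization or simply follow Kobayashi's reduction to the Hermitian subbundle theorem.
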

 If $E$ is the tangent bundle $T^{1,0}M$ and $G$ is a strongly pseudoconvex complex Finsler metric on a complex manifold $M$. We have the following proposition.
 \begin{proposition}\label{P-2.3}
 Let $M'$ be a complex submanifold of a complex manifold $M$ with a strongly pseudoconvex complex Finsler metric $G$, and let $v$ be a complex line tangent to $M'$. Then the holomorphic sectional curvature of $H_{G'}(v)$ in $M'$ (with respect to the induced metric $G'$) is not greater than the holomorphic sectional curvature $H_G(v)$ of $v$ in $M$.
 \end{proposition}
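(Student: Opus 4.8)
The plan is to deduce the inequality directly from the variational characterization of holomorphic sectional curvature recorded in Proposition \ref{P-2.1}. That characterization expresses the holomorphic sectional curvature as a supremum of Gaussian curvatures of pulled-back metrics taken over a family of holomorphic test disks, and the essential observation is that passing to a submanifold merely shrinks the family of admissible disks, so the supremum can only decrease. Before invoking it on $M'$, I would first note that the induced metric $G'$ is again a strongly pseudoconvex complex Finsler metric, which is exactly the convexity part of the preceding proposition of Kobayashi; this guarantees that Proposition \ref{P-2.1} is legitimately available on the submanifold as well.

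First I would fix $z \in M'$ and a nonzero $v \in T_z^{1,0}M'$, regarded also as an element of $T_z^{1,0}M$ via the inclusion $\iota: M' \hookrightarrow M$. Applying Proposition \ref{P-2.1} to $(M',G')$ gives
$$H_{G'}(v)=\sup\{K(\varphi^*G')(0)\},$$
where $\varphi$ ranges over all holomorphic maps $\varphi:\mathbb{D}\rightarrow M'$ with $\varphi(0)=z$ and $\varphi'(0)=\lambda v$ for some $\lambda\in\mathbb{C}^*$. Applying the same proposition to $(M,G)$ gives
$$H_G(v)=\sup\{K(\psi^*G)(0)\},$$
with $\psi$ ranging over all holomorphic maps $\psi:\mathbb{D}\rightarrow M$ subject to the identical first-order conditions $\psi(0)=z$, $\psi'(0)=\lambda v$. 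The key step is then to compare the two families: every admissible $\varphi:\mathbb{D}\rightarrow M'$ produces an admissible $\psi:=\iota\circ\varphi:\mathbb{D}\rightarrow M$, since $\psi(0)=z$ and $\psi'(0)=d\iota(\lambda v)=\lambda v$. Because $G'$ is by definition the restriction of $G$ to $T^{1,0}M'$, one has $\varphi^*G'=(\iota\circ\varphi)^*G=\psi^*G$ as complex Finsler metrics on $\mathbb{D}$, and hence $K(\varphi^*G')(0)=K(\psi^*G)(0)$. Thus the family of disks entering the supremum for $H_{G'}(v)$ embeds, with the same values of the Gaussian-curvature functional, into the family entering the supremum for $H_G(v)$; taking suprema over the smaller family on the left and the larger one on the right yields $H_{G'}(v)\leq H_G(v)$, as claimed.

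I do not expect a genuine obstacle here. The only points that require care, rather than being taken for granted, are the identity $\varphi^*G'=\psi^*G$, which must be justified from the definition of the induced Finsler metric, and the inheritance of strong pseudoconvexity by $G'$, which is what makes Proposition \ref{P-2.1} applicable on $M'$ in the first place. Alternatively, since the statement concerns precisely the tangent bundle $E=T^{1,0}M$ with subbundle $E'=T^{1,0}M'$, the conclusion can be read off as the special case of Kobayashi's subbundle proposition quoted just above; I would present the variational argument as the primary proof because it makes the monotonicity transparent, and mention the subbundle proposition as an immediate confirmation.
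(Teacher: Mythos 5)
Your proposal is correct, but it takes a genuinely different route from the paper. The paper offers no variational argument at all: it obtains Proposition \ref{P-2.3} purely as the specialization of Kobayashi's subbundle theorem \cite[Theorem 6.1]{kobayashi} (quoted immediately before the statement) to $E=T^{1,0}M$ and $E'=T^{1,0}M'$, i.e.\ exactly the one-line confirmation you relegate to a closing remark. Your primary proof instead runs through the extrinsic characterization of Proposition \ref{P-2.1}: the admissible disks for $(M',G')$ form a subfamily of those for $(M,G)$, the identity $\varphi^*G'=(\iota\circ\varphi)^*G$ holds by the very definition of the induced metric, and the supremum over the smaller family is dominated by the supremum over the larger one. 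This argument is sound, and you correctly isolate the one genuine prerequisite, namely that $G'$ inherits strong pseudoconvexity (the Levi matrix of $G'$ is $\bar A^{T}(G_{\alpha\bar\beta})A$ for the injective inclusion matrix $A$, hence positive definite), without which Proposition \ref{P-2.1} could not be invoked on $M'$. What each approach buys: the paper's citation is shorter and Kobayashi's theorem applies to arbitrary holomorphic subbundles of arbitrary bundles, not only tangent bundles; your disk argument is self-contained, makes the monotonicity mechanism transparent, and is the exact Finsler analogue of Wu's Hermitian argument \cite[Lemma 1]{wuh} to which the paper's Remark \ref{R-2.1} says the proposition reduces, so it arguably explains the statement better than the bare specialization does.
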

\begin{remark}\label{R-2.1}
If a complex Finsler manifold $(M,G)$ is from a Hermitian manifold, Proposition \ref{P-2.3} reduces to Lemma 1 in \cite{wuh}.
\end{remark}
\section{Distance and metric on a complex manifold}
\noindent

In this section, we recall  the definition of distance along with related propositions and corollaries. Through the relationships among these definitions and propositions, we establish the Hopf-Rinow theorem for a complex Finsler manifold, namely, Theorem \ref{hopf}. For more details, we refer to \cite{CK,kobayashi1,kobayashi2}.

\begin{definition}(see \cite{kobayashi1})
Let $M$ be a set. A \textit{distance} $d$ on $M$ is a function on $M \times M$ with values in the non-negative real numbers satisfying the following axioms:\\
$(1)$ \quad $d(p,q)=0$ \quad \quad $\text{if only and if}$  $p=q$;\\
$(2)$ \quad $d(p,q)=d(q,p)$, \quad \quad \quad (symmetry axiom);\\
$(3)$ \quad $d(p,r) \leq d(p,q)+ d(q,r)$, \quad (trangular inequality).

A distance $d$ on $M$ induces a topology on $M$ in a natural manner. This topology is Hausdorff if and only if $d$ is a distance.
\end{definition}
We consider only arcwise connected Hausdorff topological spaces and assume the following axiom.

Let $M$ be a topological space with a distance $d$. Given a curve $\gamma(t),$ $a \leq t \leq b$, in $M$, the \textit{length} $L(\gamma)$ of $\gamma$ is defined by
\begin{equation}
L(\gamma)=\sup \sum_{i=1}^{k}d(\gamma(t_{i-1}),\gamma(t_i)),
\end{equation}
where the supremum is taken over all partitions $a=t_0 < t_1 < \cdots < t_k=b$ of the interval $[a,b]$. A curve $\gamma$ is said to be \textit{rectifiable} if its length $L(\gamma)$ is finite. The space $(M,d)$ is said to be \textit{finitely arcwise connected} if every pair of points $p, q$ of $M$ can be joined by a rectifiable curve.
Now we define a new distance $d^i$, called the \textit{inner distance} induced by $d$, by setting
\begin{equation}
d^i(p,q)= \inf L(\gamma),
\end{equation}
where the infimum is take over all $d$-rectifiable curves $\gamma$ joining $p$ and $q$. When $M$ is a complex space, we modify the definition above by taking the infimum over all $d$-rectifiable, \textit{piecewise differentiable $(C^1)$} curves $\gamma$ joining $p$ and $q$. %This modification will remove technical difficulties which would arise when we discuss the Carath\'eodory distance. The results in this section are valid with this modified definition of $d^i$ as long as maps between complex spaces are assumed to be holomorphic.
From the definition of $d^i$ it follows immediately that $d \leq d^i$.

From the definition of $d^i$ it follows immediately that
\begin{equation}
d(p,q) \leq d^i(p,q) \quad \text{for}\quad  p,q \in M.
\end{equation}
\begin{proposition}(see \cite{kobayashi1})
Let $(M,d)$ be finitely arcwise connected. Then
\begin{equation*}
L^i(\gamma)=L(\gamma) \quad \text{for all rectifiable curves} \quad \gamma,
\end{equation*}
where $L^i$ is the length defined by the induced inner distance $d^i$.
\end{proposition}
\begin{proof}
For the convenience of the reader, the proof of the proposition is restated here.

Consider a partition $a=t_0<t_1<\cdots<t_k=b$ for the curve $\gamma(t)$, $a \leq t \leq b$. Let $\gamma_j(t)$ be the portion of $\gamma$ corresponding to the interval $t_{j-1} \leq t \leq t_{j}$. Then
$$L(\gamma)=\sum_{j=1}^k L(\gamma_j).$$
From the definition of $d^i$, we obtain
$$d^i(\gamma(t_{j-1},\gamma_i)) \leq L(\gamma_j).$$
Hence, we have
$$\sum_{j=1}^k d^i(\gamma(t_{j-1},\gamma(t_j))).$$
Taking the supremum of the left hand side over side all partitions of the interval $[a,b]$, we obtain
$$L^i(\gamma) \leq L(\gamma).$$
From the inequality $d \leq d^i$, we have
$$L(\gamma) \leq L^i(\gamma).$$
\end{proof}
\begin{corollary}\label{C-3.1}(see \cite{kobayashi1})
Let $(M,d)$ be finitely arcwise connected. Then
$$(d^i)^i=d^i.$$
\end{corollary}

\begin{definition}(see \cite{kobayashi1})
We say that a distance $d$ \textit{inner} if $d^i=d$.
\end{definition}

Let $(M,G=F^2)$ be a complex Finsler manifold, we define the length of an admissible curve by
$$L_F(\gamma)=\int_0^1 F(\gamma(t);\gamma'(t))dt$$
and a distance
$$d_F(x,y)=\inf\{L_F(\gamma)\}$$
where $\gamma$ range over all admissible curves joining $x$ and $y$, $\forall x,y \in M$. We shall refer to $d_F$ as the integrated form of a complex Finsler manifold $(M,G=F^2)$. This is also the distance induced by the complex Finsler metric $G=F^2$ on a complex manifold $M$.

\begin{lemma}\label{inner}
Let $(M,G=F^2)$ be a complex Finsler manifold. The integrated form $d_F$ of a complex Finsler manifold $(M,G=F^2)$ is inner.
\end{lemma}
\begin{proof}
By Corollary \ref{C-3.1} and definition of the integrated form $d_F$, we also know that it is inner.
\end{proof}
\begin{remark}
The above lemma can be found in \cite{CK}, but they did not provide the corresponding proof. From \cite{CK}, we also know thta the inner distance $(c_M)^i$ associated to the Carath\'eodory distance $c_M$ coincides with the integrated form $d_{F^C_M}$ of the infinitesimal Carath\'eodory metirc $F^C_M$. On the other hand, the Carath\'eodory distance $c_M$ is in general not inner, i.e., we have in general $c_M <(c_M)^i=d_{F^C_M}$. On the other hand, the Kobayashi distance is inner. For more details, we refer to \cite[Lemma 3.8, Theorem 3.10]{CK}.
\end{remark}

To prove the Hopf-Rinow theorem in the case of a complex Finsler manifold, i.e. Theorem \ref{hopf}, we need to introduce the following concepts and corresponding propositions.
\begin{definition}(see \cite{kobayashi1})
Let $d$ be a distance function on $M$. We say that $(M,d)$ is \textit{Cauchy complete} or simply \textit{complte} if every Cauchy sequence (with respect to $d$) converges. If every closed ball $B(o;r)=\{ p \in M; d(o,p) \leq r\}$ with $o \in M$ and $r>0$ is compact, then $(M,d)$ is said to be \textit{strongly complete} or \textit{finitely compact}. We say that $(M,d)$ is \textit{weakly complete} if for every point $o \in M$ there is an $r>0$ (which depends on $o$) such that $B(o;r)$ is compact.
\end{definition}
\begin{proposition}\label{P-inner}(see \cite{kobayashi1})
%\cite[p. 4, Proposition 1.1.9]{kobayashi1}
Let $d$ be a distance on a locally compact space $M$.\\
$(1)$ Then we have the following implications:
$$\text{strongly complete} \Rightarrow \text{complete}\Rightarrow \text{weakly complete}.$$
$(2)$ If $d$ is inner, then completeness implies strong completeness.
\end{proposition}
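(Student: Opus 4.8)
The plan is to treat the two parts separately, reserving the real work for the Hopf--Rinow type continuation argument in part $(2)$. Throughout I use that the topology is Hausdorff, which holds because $d$ is a genuine distance, and that closed balls $B(o;r)$ are $d$-closed since $d(o,\cdot)$ is continuous.

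For part $(1)$, the two implications are comparatively soft. To see \emph{strongly complete} $\Rightarrow$ \emph{complete}, I would take a Cauchy sequence $\{p_n\}$; being Cauchy it is $d$-bounded, so it lies inside some closed ball $B(o;r)$, which is compact by hypothesis. A Cauchy sequence in a compact metric space has a convergent subsequence and therefore converges, giving completeness. For \emph{complete} $\Rightarrow$ \emph{weakly complete}, I would observe that weak completeness in fact follows from local compactness alone: given $o$, pick a compact neighbourhood $K$ of $o$; if no ball $B(o;r)$ were contained in $K$ one could choose $p_n \notin K$ with $d(o,p_n)\le 1/n$, so $p_n \to o \in \mathrm{int}(K)$, forcing $p_n \in K$ eventually, a contradiction. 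Hence $B(o;r)\subseteq K$ for some $r>0$, and being $d$-closed inside the compact $K$ it is itself compact.

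For part $(2)$ the strategy is the classical Hopf--Rinow dichotomy. Fix a base point $o$ and set $\rho=\sup\{r>0 : B(o;r)\text{ is compact}\}$; by part $(1)$ this set is nonempty, and since a smaller ball is a closed subset of a larger one it is an interval, so $\rho>0$. The goal is to prove $\rho=\infty$, and this is precisely where innerness is indispensable. First, assuming $\rho<\infty$, I would show $B(o;\rho)$ itself is compact: given small $\epsilon>0$ and $p\in B(o;\rho)$, innerness $d=d^i$ produces a rectifiable curve from $o$ to $p$ of length $<\rho+\epsilon$; the point $q$ lying a short arclength $2\epsilon$ back from $p$ along this curve satisfies $d(q,p)\le 2\epsilon$ and $d(o,q)<\rho-\epsilon$, so $q$ belongs to the compact ball $B(o;\rho-\epsilon)$. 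Covering that compact ball by finitely many $\epsilon$-balls then shows $B(o;\rho)$ is totally bounded; since $M$ is complete and $B(o;\rho)$ is closed, total boundedness together with completeness yields compactness.

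The second half is the main obstacle: extending compactness strictly past $\rho$ to contradict its maximality. Since $B(o;\rho)$ is compact and $M$ is locally compact Hausdorff, it has a relatively compact open neighbourhood $V$, so $\bar V$ is compact. I would then claim $B(o;\rho+\delta)\subseteq \bar V$ for some $\delta>0$. If not, one obtains points $x_n\notin \bar V$ with $d(o,x_n)\le\rho+1/n$; joining $o$ to $x_n$ by a curve of length $<\rho+2/n$ (innerness again) and taking its first exit point $y_n\in\partial V$ gives $d(o,y_n)<\rho+2/n$, while $\partial V$ is compact, so a subsequence $y_{n_j}\to y\in\partial V$ with $d(o,y)\le\rho$; thus $y\in B(o;\rho)\subseteq V$, contradicting $y\in\partial V$. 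With the claim established, $B(o;\rho+\delta)$ is $d$-closed inside the compact $\bar V$, hence compact, contradicting $\rho=\sup$. Therefore $\rho=\infty$, every closed ball about $o$ is compact, and since $o$ was arbitrary $(M,d)$ is strongly complete. The delicate point in both halves is that they collapse without $d=d^i$: innerness is exactly what converts a bound on $d(o,p)$ into a curve whose intermediate points remain in already-controlled balls.
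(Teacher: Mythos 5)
Your proof is correct, and since the paper states this proposition as an imported result (citing \cite[Proposition 1.1.9]{kobayashi1}) without reproducing an argument, the relevant comparison is with Kobayashi's proof, which yours essentially replicates: the soft implications in part $(1)$ plus the Hopf--Rinow-type continuation in part $(2)$ via $\rho=\sup\{r: B(o;r) \text{ compact}\}$, using innerness both to prove total boundedness of $B(o;\rho)$ and to trap boundary exit points in $\partial V$. Both delicate steps (continuity of the arclength function along a rectifiable curve, and the first-exit-point argument) are handled correctly, so there is nothing to flag.
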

\begin{proof}
For the convenience of the reader, the proof of the proposition is restated here.

$(1)$~~ $(i)$~~The strong completeness obviously implies completeness.

~~~~~~~$(ii)$~~complete $\Longrightarrow$ weakly complete.

Assume that $(X, d)$ is not weakly complete. Then there is a point $o \in X$ such that for every $r>0$ the ball $B(o;r)$ is not compact. For each natural number $n$, take a sequence of points $\{p_{nj}\}_{j=1}^\infty$ in $B(o;\frac{1}{n})$ without accumulation points.

We note that all sequences of the type $\{q_n=p_{nj_n}\}_{n=1}^{\infty}$, where $j_n$ are arbitrary natural numbers, are Cauchy and converges to $o$.

Fix a compact neighborhood $U$ of $o$. For each fixed $j$, let $N_j$ be the smallest integer such that $p_{n_j} \in U$ for all $n >N_j$ (so $p_{{N_j}j} \notin U$). We put $A=\sup_jN_j \leq \infty$.

If $A=\infty$, then there is a subsequence $\{j(\lambda)\}_{\lambda=1}^{\infty}$  of  $ \{j\}_{j=1}^{\infty}$ such that $N_{j(\lambda)} \nearrow \infty$ . Then $p_{N_{j(\lambda)}j(\lambda)} \notin U$ contradicts the fact that the sequence $\{p_{n_{j(\lambda)j(\lambda)}}\}_{\lambda=1}^{\infty}$ converge to $o$.

If $A < \infty$, take $n >A$. Then $\{p_{nj}\}_{j=1}^{\infty}$ are in a compact set $U \cap B(o;\frac{1}{n})$ and must have an accumulation point in $B(o;\frac{1}{n})$, which is also a contradiction.

$(2)$~~complete $\Longrightarrow$ strongly complete when $d$ is inner.

By the condition $d$ is an inner and Proposition 1.1.8 in \cite{kobayashi1}, we know that it induces the given topology of $M$.
\end{proof}

By Proposition \ref{P-inner} and Lemma \ref{inner}, we have the following theorem.
\begin{theorem}\label{hopf}
Suppose that $(M,G=F^2)$ is a complex Finsler manifold. And let $ d_F$ be the integrated form of $F$. Then the following statements are equivalent:\\
$(1)$~~the distance $d_F$ is complete;\\
$(2)$~~closed bounded subsets of $M$ are compact.
\end{theorem}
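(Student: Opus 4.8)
The plan is to deduce Theorem \ref{hopf} as a direct consequence of Proposition \ref{P-inner} together with Lemma \ref{inner}, once the notions of the statement are matched with those in the proposition. The essential observation is that the theorem is precisely the equivalence ``complete $\Leftrightarrow$ strongly complete'' (finitely compact) for the specific distance $d_F$, and the hypothesis that $d_F$ is \emph{inner}, supplied by Lemma \ref{inner}, is exactly what upgrades the generic implication ``strongly complete $\Rightarrow$ complete'' to a genuine equivalence.

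First I would verify the two standing hypotheses needed to invoke Proposition \ref{P-inner}: that $d_F$ is a genuine distance, and that $M$ is locally compact. Local compactness is automatic since $M$ is a complex manifold, hence locally homeomorphic to an open subset of $\mathbb{C}^n$. To see that $d_F$ is a distance rather than merely a pseudo-distance, I would use that $G=F^2$ is a complex Finsler metric (not just pseudo-Finsler), so $F(z;v)=0$ if and only if $v=0$; positivity of $F$ on nonzero vectors forces $L_F(\gamma)>0$ for any nonconstant admissible curve, whence $d_F(x,y)=0$ implies $x=y$. This is the only place where the strict Finsler condition (as opposed to pseudo-Finsler) is used, and it is what makes axiom $(1)'$ hold.

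Next I would record the translation of terminology. Statement $(2)$ of Theorem \ref{hopf}, that closed bounded subsets are compact, is equivalent to the condition that every closed metric ball $B(o;r)$ is compact, i.e. to \emph{strong completeness} (finite compactness) in the language of the preliminaries: a closed bounded set sits inside some closed ball, and conversely every closed ball is closed and bounded. Statement $(1)$ is Cauchy completeness verbatim. With these identifications the theorem reads ``complete $\Leftrightarrow$ strongly complete.''

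Finally I would assemble the equivalence. By Lemma \ref{inner}, $d_F$ is inner. The forward direction $(2)\Rightarrow(1)$, strong completeness implies completeness, is the first implication of Proposition \ref{P-inner}$(1)$ and needs no inner hypothesis. For the reverse direction $(1)\Rightarrow(2)$, I invoke Proposition \ref{P-inner}$(2)$: since $d_F$ is inner, completeness implies strong completeness. Combining the two directions yields the equivalence of $(1)$ and $(2)$, and closing the circle via the chain in Proposition \ref{P-inner}$(1)$ keeps everything consistent. I do not anticipate a serious obstacle here; the only point demanding care is the bookkeeping that ``closed bounded $=$ finitely compact,'' together with confirming that the distance axioms genuinely hold so that the locally compact metric-space framework of Proposition \ref{P-inner} applies. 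The substance of the argument has already been front-loaded into Lemma \ref{inner}, whose proof that $d_F$ is inner is the real content.
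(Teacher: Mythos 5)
Your proposal follows exactly the paper's route: the paper deduces Theorem \ref{hopf} immediately from Lemma \ref{inner} (that $d_F$ is inner) combined with Proposition \ref{P-inner}, which is precisely your argument with the terminological bookkeeping (closed bounded sets compact $=$ strongly complete) made explicit. One small point you state too quickly—that $F>0$ on nonzero vectors makes $d_F(x,y)=0$ imply $x=y$, which really requires the standard local comparison of $F$ with a Euclidean metric on a compact coordinate neighborhood to bound the infimum of lengths away from zero—is routine and is not addressed in the paper either.
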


\section{some notions, lemmas, theorems}
\noindent

In this section, we exhibit some definitions and collect several lemmas and theorems, which will be used in the subsequent section. Firstly, we introduce some basic concepts and results in function theory. For more details, we refer to \cite{WHS}.

Let $M, N$ be complex manifolds. Then by definition: $(i)$~$\mathcal{C}(M,N)$ is the set of all \textit{continuous} mappings from $M$ to $N$. $(ii)$~$\mathcal{H}(M,N)$ is the set of all \textit{holomorphic} mappings from $M$ to $N$. For convenience, $\mathcal{C}(M)\equiv \mathcal{C}(M, M)$, and $\mathcal{H}(M)\equiv\mathcal{H}(M,M)$.

\begin{definition}\label{D-4.1}(see \cite{WHS})
%\cite[p. 197]{WHS}
A sequence $\{f_i\} \subseteq \mathcal{C}(M,N)$ is called \textit{compactly divergent} if and only if for given any compact $K$ in $M$, and compact $K'$ in $N$, there exists an $i_0$ such that $f_i(K) \cap K' \neq \varnothing$ for all $i \geq i_0$.
\end{definition}
\begin{definition}(see \cite{WHS})
A set is \textit{relatively compact} in $M$ if and only if it has compact closure in $M$.
\end{definition}
\begin{definition}(see \cite{WHS})
%\cite[p. 197, Definition 1.1]{WHS}
A subset $\mathcal{F}$ of $\mathcal{C}(M,N)$ is called \textit{normal} if and only if every sequence of $\mathcal{F}$ contains a subsequence which is either relatively compact in $\mathcal{C}(M,N)$ or compactly divergent.
\end{definition}
%Two related concept which will play an important role are \textit{local compactness} and \textit{equicontinuity}. The concept will now defined.
\begin{definition}(see \cite{WHS})
%\cite[p. 197]{WHS}
Let $d_N$ be the distance function of $N$. Then $\mathcal{F} \subseteq \mathcal{C}(M,N)$ is called an \textit{equcontinuous family} if and only if given any $\varepsilon>0$ and any $p \in M$, there exists a neighborhood $U$ of $p$ such that $q \in U$ implies $d_N(f(p),f(q)) <\varepsilon$ for all $f \in \mathcal{F}$.\end{definition}

%It will be convenient later on to introduce the following two notions.
\begin{definition}(see \cite{WHS})
%\cite[p. 199, Definition 1.2]{WHS}
A complex manifold $N$ is called \textit{taut} if and only if for every complex manifold $M$, the set of all holomorphic mappings $\mathcal{H}(M,N)$ is normal.
\end{definition}
\begin{definition}(see \cite{WHS})
%\cite[p. 199, Definition 1.3]{WHS}
Let $N$ be a complex manifold and $d$ be a metric on $N$ inducing its topology. Then $(N,d)$ (or if no confusion is possible, just $N$) is \textit{tight} if and only if for every complex manifold $M$, the set of all holomorphic mappings $\mathcal{H}(M,N)$ is equicontinuous.
\end{definition}

%We introduce some lemmas which will be used in the subsequent section.
\begin{lemma}(see \cite{WHS})\label{normal}%\cite[p. 197, Lemma 1.1]{WHS}
Let $\mathcal{F} \subseteq \mathcal{C}(M,N)$, where $M, N$ are connected, locally compact metric spaces. Then\\
 $(i)$~~If $\mathcal{F}$ is compact, then $\mathcal{F} $ is normal.\\
 $(ii)$~~If $\mathcal{F}$ is normal, then its closure is locally compact.\\
 $(iii)$~~If $\mathcal{F}$ is equicontinuous and if each bounded subset of $N$ is relatively compact, then $\mathcal{F}$ is normal.
 \end{lemma}

\begin{lemma}\label{Ascoli}(see \cite{WHS} Ascoli theorem). %\cite[p. 198, Lemma 1.2]{WHS}
$\mathcal{F} \subseteq \mathcal{C}(M,N)$ is compact if and only if:\\
$(a)$~~$\mathcal{F}$ is closed in $\mathcal{C}(M,N).$\\
$(b)$~~$\mathcal{F}(p)(\equiv\{n \in N: n=f(p)$ for some $f \in \mathcal{F}\})$ is relatively compact in $N$ for every $p \in M$.\\
$(c)$~~$\mathcal{F}$ is equicontinuous.
\end{lemma}

\begin{lemma}\label{equi}(see \cite{WHS})
$N$ is taut if $\mathcal{H}(B^n, N)$ is normal for all $n$, where $B^n$ is the unit ball in $\mathbb{C}^n$. $(N,d)$ is tight if $\mathcal{H}(B^n, N)$ is equicontinuous for all $n$.
\end{lemma}

\begin{definition}(see \cite{WHS})
%\cite[p. 200, Definition 2.1]{WHS}
If $f:M \rightarrow N$ is a holomorphic and $N$ is Hermitian, then a univalent ball (for $f$) is an open ball in the image of $f$ onto which $f$ maps an open set biholomorphically.
\end{definition}

Now, we introduce the following definitions of \textit{strongly negatively curved family} and \textit{strongly negatively curved} in Hermitian manifold.
\begin{definition}\label{D-4.7}(see \cite{WHS})
%\cite[p. 217, Definition 8.2]{WHS}
Let $\mathcal{F}: M \rightarrow N$ be a family of holomorphic mappings from a complex manifold $M$ into a Hermitian manifold $N$ with Hermitian metric $h$. $\mathcal{F}$ is called a \textit{strongly negatively curved family (of order $-k_0<0$)} if and only if for any $f \in \mathcal{F}$ and for holomorphically imbedded disc $\mathbb{D}$ in $M$ the curvature of the pseudo-Hermitian metric $(f|_{\mathbb{D}})^*h$ is bounded above by $-k_0<0$.
\end{definition}

 By Remark \ref{R-2.1}, Propositions \ref{P-2.1} and \ref{P-2.3}, we can simplify Definition \ref{D-4.7} as follows.
\begin{definition}\label{D-4.8}
Let $\mathcal{F}: M \rightarrow N$ be a family of holomorphic mappings from a complex manifold $M$ into a Hermitian manifold $N$ with Hermitian metric $h$. $\mathcal{F}$ is called a \textit{strongly negatively curved family (of order $-k_0<0$)} if and only if the holomorphic sectional curvature of Hermitian manifold $(N,h)$ is bounded above by $-k_0<0$.
\end{definition}
\begin{definition}(see \cite{WHS})\label{D-4.9}
%\cite[p. 218]{WHS}
A Hermitian manifold $(N,h)$is \textit{strongly negatively curved} (of order $-k_0<0$) if and only if the holomorphic sectional curvature of the Hermitian manifold $(N,h)$ is bounded above by $-k_0$.
\end{definition}
By Propositions \ref{P-2.1} and \ref{P-2.3}, we are able to introduce the complex Finsler version of the definitions of \textit{strongly negatively curved family} and \textit{strongly negatively curved}.

\begin{definition}\label{D-4.11}
Let $\mathcal{F}: M \rightarrow N$ be a family of holomorphic mappings from a complex manifold $M$ into a complex Finsler manifold $(N,G=F^2)$. $\mathcal{F}$ is called a \textit{strongly negatively curved family (of order $-k_0<0$)} if and only if the holomorphic sectional curvature of the complex Finsler manifold $(N,G=F^2)$ is bounded above by $-k_0<0$.
\end{definition}

\begin{definition}\label{D-4.12}
A complex Finsler manifold $(N,G=F^2)$is \textit{strongly negatively curved} (of order $-k_0<0$) if and only if the holomorphic sectional curvature of the complex Finsler manifold $(N,G=F^2)$ is bounded above by $-k_0$.
\end{definition}

\begin{theorem}\label{Wu-1} (see \cite{WHS})
%\cite[p. 200, Theorem A]{WHS}
If $\mathcal{F}_a$ is a relatively compact family, then there exists a positive constant $\alpha$ such that every $f \in \mathcal{F}_a$ possesses a univalent ball of radius $\alpha$ around $f(p_0)$.
\end{theorem}
\begin{theorem}\label{Wu-2}(see \cite{WHS})
%\cite[p. 200, Theorem B]{WHS}
Suppose $M$ also Hermitian. Hypothesis as above, then there is a positive constant $\alpha$ such that every $f \in  \mathcal{F}_a$ is biholomorphic on the open ball of radius $\lambda$ around $p_0$.
\end{theorem}

\begin{theorem}\label{Wu-4}(see \cite{WHS})
%\cite[p. 208, Theorem D]{WHS}
If $(N,d)$ is a tight complex manifold, then its automorphism group $\mathcal{H}(N)$ is a (not necessarily connected) Lie group, and the isotropy subgroup of $\mathcal{H}(N)$ at a point is a compact Lie group.
\end{theorem}

\begin{theorem}\label{Wu-5}(see \cite{WHS})
%\cite[p. 211, Theorem E]{WHS}
If $(N,d)$ is a tight complex manifold, then there is no nonconstant holomorphic map of $\mathbb{C}^n$ into $N$.
\end{theorem}
\begin{theorem}\label{Wu-6}(see \cite{WHS})
%\cite[p. 211, Theorem F]{WHS}
If a domain $E$ (=open connected set) in $\mathbb{C}^n$ is taut and $E\neq \mathbb{C}^n$, then $E$ is pseudoconvex and hence a domain of holomorphy.
\end{theorem}
\begin{theorem}\label{Wu-7} (see \cite{kobayashi})
If $M$ is a complex Finsler manifold $G$ whose holomorphic sectional curvature is bounded from above by a negative constant, then $M$ is hyperbolic.
\end{theorem}
\begin{theorem} \label{Wu-8}(see \cite{kobayashi2})
A connected complex Lie group of holomorphic transformations acting effectively on a hyperbolic manifold $M$ reduces to the identity element only.
\end{theorem}
\section{Main theorems}
\noindent

In this section, by the Schwarz Lemma from a complete K\"ahler manifold into a complex Finsler manifold and Theorem \ref{hopf}, we prove the
complex Finsler version of \textit{basic theorem}, i.e., Theorem \ref{basic}. In order to prove the complex Finsler version of basic theorem, we introduce the Schwarz lemma from a complete K\"ahler manifold into a complex Finsler manifold.
\begin{theorem}\label{SL}(see \cite{NZ1,NZ2,LQZ})
%\cite[p. 1662, Theorem 1.1 ]{NZ1}
 Suppose that $(M,h)$ is a complete K\"ahler manifold with holomorphic sectional curvature bounded from below by a constant $K_1$ and sectional curvature bounded from below, while $(N,G)$ is a complex Finsler manifold with holomorphic sectional curvature of the Chern-Finsler connection bounded from above by a constant $K_2<0$. Then any holomorphic map $f$ from $M$ into $N$ satisfies
 \begin{equation}
(f^*G)(z;v) \leq \frac{K_1}{K_2}h(z;v), ~~\forall(z;v) \in T^{1,0}M.
\end{equation}
In particular, $K_1 \geq 0,$  then any holomorphic map $f$ from $M$ into $N$ is constant.
\end{theorem}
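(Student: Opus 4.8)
The plan is to prove this as a Schwarz lemma of Yau type, comparing the pulled-back pseudo-metric $f^*G$ with the domain metric $h$ by a maximum-principle argument, with the decisive feature that the non-Hermitian (non-quadratic) nature of $G$ is never confronted directly but is absorbed into a one-dimensional statement through the Abate--Patrizio characterization of holomorphic sectional curvature in Proposition \ref{P-2.1}.

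First I would introduce the scale-invariant ratio
$$u(z;v) = \frac{(f^*G)(z;v)}{h(z;v)}, \qquad 0 \neq v \in T_z^{1,0}M,$$
which descends to the projectivized tangent bundle because both $f^*G$ and $h$ are homogeneous of degree two in $v$; the goal is to show $\sup u \leq K_1/K_2$. Since $M$ is noncompact the supremum need not be attained, so I would invoke the Omori--Yau generalized maximum principle. This is precisely where the hypotheses that $M$ is \emph{complete} and has \emph{sectional curvature bounded from below} are used: they furnish an almost-maximizing sequence $(z_j;v_j)$ along which $u(z_j;v_j)\to\sup u$, the gradient of $\log u$ tends to zero, and $\Delta_h \log u \le \varepsilon_j \to 0$. (A cutoff/exhaustion may be needed to guarantee the principle applies to $\log u$; this is routine.) One then passes to a limiting extremal configuration $(z_0;v_0)$.

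Next I would carry out a Bochner-type computation of the complex Laplacian $\Delta_h \log u$ along the extremal direction $v_0$, restricted to the region $\{u>0\}$ (on $\{u=0\}$ the asserted bound is vacuous). The computation separates into a target contribution and a domain contribution. The target contribution is, up to the factor $u$, exactly the Gaussian curvature at $0$ of the pseudo-metric induced by $f$ on a holomorphic disc $\varphi:\mathbb{D}\to M$ tangent to $v_0$; by Proposition \ref{P-2.1} this Gaussian curvature is at most $K_G(df(v_0)) \leq K_2$, so the target contribution is $\geq -K_2\,u = |K_2|\,u$. This is the crux where the Finsler structure is handled, since on a disc every metric is conformal and the distinction between Hermitian and Finsler evaporates. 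The domain contribution, computed intrinsically in $h$-normal coordinates and using the Kähler condition, reproduces the holomorphic sectional curvature of $(M,h)$ in the direction $v_0$, which is $\geq K_1$. Altogether one obtains an inequality of the form
$$\Delta_h \log u \;\geq\; K_1 - K_2\, u,$$
and feeding the almost-maximum condition $\Delta_h \log u \le \varepsilon_j \to 0$ into it forces $K_2 u \geq K_1$ in the limit; dividing by $K_2<0$ yields $u \leq K_1/K_2$, which is the assertion.

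The main obstacle is exactly the asymmetry between the two sides together with the Finsler target: for the target one needs an \emph{upper} curvature bound, which is compatible with restricting $f$ to a disc (the induced Gaussian curvature can only drop, by Proposition \ref{P-2.1}), whereas for the domain one needs a \emph{lower} bound, for which disc restriction points the wrong way, so the Kähler structure must be exploited to extract the intrinsic holomorphic sectional curvature with the favorable sign. Getting the constants and signs in the Bochner identity to align so that the negativity $K_2<0$ is what produces the finite bound is the delicate bookkeeping. Finally, the ``in particular'' clause is immediate: if $K_1 \geq 0$ then $K_1/K_2 \leq 0$ because $K_2<0$, forcing $u \equiv 0$, i.e. $f^*G \equiv 0$ and hence $df \equiv 0$, so $f$ is constant.
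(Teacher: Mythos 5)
You should first note a structural fact: the present paper does not prove Theorem \ref{SL} at all --- it is imported verbatim from \cite{NZ1} (Theorem 1.1 there) and used as a black box in the proof of Theorem \ref{basic}, so there is no internal proof to compare against. Measured against the actual argument in the cited source, your outline follows essentially the same route: a Yau-type generalized maximum principle applied to the degree-zero homogeneous ratio $u=(f^*G)/h$, with completeness plus the lower sectional curvature bound of $(M,h)$ feeding the Omori--Yau principle, the domain contribution extracted via the K\"ahler condition and bounded by $K_1$, and the Finsler target reduced to a one-dimensional curvature estimate; in \cite{NZ1} the target bound enters directly through the curvature of the Chern--Finsler connection (which is how the hypothesis is phrased) rather than through the disc characterization of Proposition \ref{P-2.1}, but by that proposition the two are interchangeable, and your observation that on a disc the Hermitian/Finsler distinction evaporates is exactly the right mechanism.

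One step you dismiss as routine deserves flagging, because it is where the hypotheses earn their keep. The function $\log u$ lives on the projectivized bundle $P(T^{1,0}M)$, not on $M$, so Omori--Yau cannot be invoked as stated: one must either equip $P(T^{1,0}M)$ with an induced metric whose Ricci curvature is bounded below --- and this is precisely what the assumption \emph{sectional curvature bounded from below} provides, over and above the holomorphic bound $K_1$ --- or pass to the fiberwise supremum over the compact fibers at the cost of smoothness. Relatedly, at the almost-maximum point the criticality of $u$ in the fiber direction is what kills the mixed second-order terms and leaves only the $(v_0,\bar v_0)$-component of the complex Hessian, which is why holomorphic sectional curvature (rather than bisectional or Ricci) suffices on both sides; your sketch is consistent with this but should make it explicit, since without it the claim that the domain term ``reproduces'' $H_h(v_0)\geq K_1$ is unjustified. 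The concluding step is correct as you state it: $K_1\geq 0$ and $K_2<0$ force $u\equiv 0$, hence $f^*G\equiv 0$ and $df\equiv 0$, so $f$ is constant.
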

 Now we introduce and prove the complex Finsler version of basic theorem.
\begin{theorem}\label{basic}
A strongly negatively curved family of $\mathcal{F}$ of holomorphic mappings from a complex manifold $M$ into a complex Finsler manifold $(N,G=F^2)$ is equicontinuous. If $(N,G=F^2)$ is complete, then $\mathcal{F}$ is normal.
\end{theorem}
\begin{proof}
The idea of the following proof was originally given by Wu \cite{WHS} in establishing basic theorem of normal families of holomorphic mappings in Hermitian manifold.

Suppose we know that $(N,G=F^2)$ is complete, then by Theorem \ref{hopf}, each of bounded subset of $N$ is compact and relatively compact. Combining with Lemma \ref{normal}, we know that $\mathcal{F}$ is normal. Hence it suffices to prove the equicontinuity of $\mathcal{F}$, assuming that it is strongly negatively curved. This is an entirely local question, so we fix an arbitrary point $p \in M$, and take $p$-centered coordinate functions $\{z_1,\cdots,z_m\}$ so that the unit ball $B^n(o)=\{\sum_{i=1}^m |z_i|^2 <1\}$ is well defined. For convenience, we assume $\mathcal{F}$ is strongly negatively curved of order $-1$, and that the Bergman metric $h_b$ on the unit ball $B^n(o)$ (which is a K\"ahler metric of constant \textit{holomorphic sectional curvature} $-1$, and its sectional curvature of $(B^n(o), h_b)$ takes values in the interval $[-1, -\frac{1}{4}]$, see Zheng \cite{ZFY}). Applying Theorem \ref{SL}, we have
\begin{equation}
f^*G(z;v) \leq h_b(z;v), ~~\text{for}~~f \in \mathcal{F}, ~~\forall (z;v) \in T^{1,0}M.
\end{equation}
Therefore, for any $q \in B^n$ and $f\in \mathcal{F}$, we have
\begin{equation}
d_F(f(p),f(q)) \leq d_{h_b}(p,q),
\end{equation}
where $d_G$ and $d_{h_b}$ are respectively the integrated form of $G$ and $h_b$.\\
Hence equicontinuous is now obvious: if $\varepsilon$ is given, a ball $B_1$ of radius $\frac{1}{2}\varepsilon$ relative to $d_{h_b}$ around the origin of $B^n$ so that if $q_1, q_2$ are in $B_1$, and $f \in \mathcal{F}$, we have
$$d_F(f(q_1),f(q_2)) \leq d_F(f(q_1),f(o))+d_F(f(q_2),f(o)) \leq d_{h_b}(q_1,o)+d_{h_b}(q_2,o) \leq \varepsilon.$$
Hence, we know $\mathcal{F}$ is equicontinuous.
\end{proof}
As a simple application of Theorem \ref{basic} and Lemma \ref{equi}, we obtain the following result.
\begin{corollary}\label{tight}
If $N$ is a strongly negatively curved complex Finsler manifold, then it is tight. If $N$ is furthermore complete, then it is taut.
\end{corollary}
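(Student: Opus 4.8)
The plan is to obtain this directly from Theorem \ref{basic} together with \cite[p. 197, Lemma 1.3]{WHS}, which characterizes tightness and tautness of a target manifold in terms of the equicontinuity and the normality, respectively, of the full family $\mathcal{H}(M,N)$ as $M$ ranges over all complex manifolds. The key observation is that, by our definition, a family $\mathcal{F}\subseteq\mathcal{H}(M,N)$ is strongly negatively curved of order $-k_0$ precisely when the holomorphic sectional curvature of $(N,G=F^2)$ is bounded above by $-k_0$; this condition involves only the target $N$ and not the particular family.

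First I would record the consequence of this observation: since $N$ is assumed strongly negatively curved, say of order $-k_0<0$, the entire family $\mathcal{H}(M,N)$ of holomorphic mappings from an arbitrary complex manifold $M$ into $N$ is automatically a strongly negatively curved family. Applying Theorem \ref{basic} with $\mathcal{F}=\mathcal{H}(M,N)$ then yields that $\mathcal{H}(M,N)$ is equicontinuous relative to the integrated form $d_G$ on $N$. As $M$ was arbitrary, this holds for every $M$, which by \cite[p. 197, Lemma 1.3]{WHS} is exactly the assertion that $N$ is tight.

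For the second statement I would add the completeness hypothesis on $N$, which is precisely the input needed for the second conclusion of Theorem \ref{basic}: under it the strongly negatively curved family $\mathcal{H}(M,N)$ is not merely equicontinuous but normal. Since $M$ is again arbitrary, $\mathcal{H}(M,N)$ is normal for every $M$, and by the same Lemma 1.3 this means that $N$ is taut.

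Because the argument is a straightforward specialization of Theorem \ref{basic}, no substantial new difficulty arises. The only point deserving care --- and the step I would make explicit --- is the bookkeeping that matches the abstract definitions of \emph{tight} and \emph{taut} from \cite{WHS} with the conclusions ``equicontinuous for all $M$'' and ``normal for all $M$'' produced by Theorem \ref{basic}; once Lemma 1.3 supplies this dictionary, the corollary follows at once.
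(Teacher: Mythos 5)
Your proposal is correct and coincides with the paper's own derivation: the paper obtains this corollary exactly as ``a simple application of Theorem \ref{basic} and \cite[p. 197, Lemma 1.3]{WHS}'', which is precisely your route of applying Theorem \ref{basic} to $\mathcal{F}=\mathcal{H}(M,N)$ for arbitrary $M$ and then invoking Wu's Lemma 1.3 as the dictionary between ``equicontinuous/normal for all $M$'' and ``tight/taut''. Your only addition is to make explicit the observation that strong negative curvature is a condition on $N$ alone, so every family $\mathcal{H}(M,N)$ qualifies --- a detail the paper leaves implicit.
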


In 2022, the author and Zhong \cite{NZ1} proved the following theorem.
\begin{theorem}\label{example} (see \cite{NZ1})
Suppose that $D\subset \mathbb{C}^n(n\geq 2)$ is a bounded domain. Then $D$ admits a non-Hermitian quadratic strongly pseudoconvex complex Finsler metric $G:T^{1,0}D\rightarrow \mathbb{R}^+$ such that its holomorphic sectional curvature is bounded from above by a negative constant.
\end{theorem}
By Corollary \ref{tight} and Theorem \ref{example}, we have the following theorem

\begin{theorem}(see \cite{WHS} Theorem of Montel) A uniformly bounded family of holomorphic mappings from a complex manifold $M$ into $\mathbb{C}^n$ is equicontinuous and hence relatively compact in $\mathcal{H}(M, \mathbb{C}^n)$.
\end{theorem}
\begin{proof}
We can assume that $\mathcal{F} \subset \mathcal{H}(M, \mathbb{C}^n)$ is uniformly bounded. Hence, we have
$$\mathcal{F}(M) \subset D_1, $$
where $D_1$ is a bounded domain in $\mathbb{C}^n$.
By Theorem \ref{example}, $D_1$ admits a complex Finsler metric $G$ such that ist holomorphic sectional curvature is bounded from above by a negative constant. Then $(D_1, G)$ is a strongly negatively curved complex Finsler manifold. Combing with Corollary \ref{tight}, we know that $D_1$ is tight. Thus a uniformly bounded family of holomorphic mappings from a complex manifold $M$ into $\mathbb{C}^n$ is equicontinuous and hence relatively compactly in $\mathcal{H}(M,\mathbb{C}^n)$.
\end{proof}

Recently, the author \cite{Nie1} proved the following theorem.
\begin{theorem}\label{T-4.5} (see \cite{Nie1})
Suppose that $\mathcal{D}$ is a bounded pseudoconvex domain with $C^2$-boundary or bounded convex domain in $\mathbb{C}^n$. Then $\mathcal{D}$ admits a complete strongly pseudoconvex complex Finsler metric $G:T^{1,0}\mathcal{D}\rightarrow [0,+\infty)$ such that its holomorphic sectional curvature is bounded from above by a negative constant.
\end{theorem}
By Corollary \ref{tight} and Theorem \ref{T-4.5}, we have the following example.
\begin{example}
Suppose that $\mathcal{D}$ is a bounded pseudoconvex domain with $C^2$-boundary or bounded convex domain in $\mathbb{C}^n$. Then the family of holomorphic mappings from a complex manifold $M$ into $\mathcal{D}$ is normal. What's more, $\mathcal{D}$ is taut.
\end{example}

The above results about a bounded domain in $\mathbb{C}^n$. Now, we provide a more general example. To and Yeung \cite{TY}  proved the following theorem.
\begin{theorem} \label{T-4.7}
Let $\pi: \mathcal{X} \rightarrow S$ be an effectively parametrized holomorphic family of compact canonically polarized complex manifolds over a complex manifold $S$. Then $S$ admits a $C^{\infty}$ Aut $(\pi)$-invariant Finsler metric whose holomorphic sectional curvature is bounded above by a negative constant. As a consequence, $S$ is Kobayashi hyperbolic.
\end{theorem}
By Theorem \ref{T-4.7} and Corollary \ref{tight}, we can obtain the following example.
\begin{example}
Let $\pi: \mathcal{X} \rightarrow S$ be an effectively parametrized holomorphic family of compact canonically polarized complex manifolds over a complex manifold $S$. The family of holomorphic mappings from a complex manifold into a complex manifold $S$ is equicontinuous. Hence base complex manifold $S$ is tight.
\end{example}
\begin{remark}
We also obtian that $S$ admits a complete $C^{\infty}$ Aut $(\pi)$-invariant Finsler metric whose holomorphic sectional curvature is bounded above by a negative constant. what's more, $S$ is a complete Kobayashi-hyperbolic. This will appear elsewhere.
\end{remark}
\section{Cartan-Carath\'eodory-Kaup-Wu theorem on a complex Finsler manifold}
\noindent

In this section, we introduce and prove Theorem \ref{C}. To better elucidate Theorem \ref{C}, we merge Theorems C and C' in \cite{WHS} into a theorem as follows.
\begin{theorem}\label{Wu-3}(see \cite{WHS})
%\cite[p. 204-205, Theorem C]{WHS}
Let $M$ be a relatively compact open submanifold of a complex manifold $N$. Let $d$ be some metric on $N$ such that $(M,d)$ is tight. Or let $M$ be a taut manifold. If $f: M\rightarrow M$ is a holomorphic mapping and $f(p)=p$, consider $df_p: T^{1,0}_pM  \rightarrow T^{1,0}_pM$. Then:\\
$(1)$ $|\det df_p|\leq 1$;\\
$(2)$ If $df_p:T_p^{1,0}M \rightarrow T^{1,0}_pM$ is the identity linear map, then $f$ is the identity mapping of $M \rightarrow M$.\\
$(3)$ $|\det df_p|=1$ if and only if $f$ is an automorphism.\\
\end{theorem}
\begin{proof}
If $M$ is a a relatively compact open submanifold of a complex manifold $N$. Let $d$ be some metric on $N$ such that $(M,d)$ is tight. A detailed proof process is provided by Wu. Now, we prove the case where $M$ is a taut manifold.

Since $M$ is taut, we know that the set of all holomorphic mappings $\mathcal{H}(M)$ is normal. By the definition of normal, we know that every sequence of a subset $\mathcal{F}$ of $\mathcal{H}(M)$ is either relatively compact in $\mathcal{H}(M)$ or compactly divergent. Now, we consider the iterates $\{f^i\}$ of $f$.  From the condition $f(p)=p$ and the Definition \ref{D-4.1}, $f^i$ is not compactly divergent. Therefore, $f^i$ is relatively compact. The subsequent proof can simply replicate that found in \cite{WHS} or \cite{kobayashi1}.
\end{proof}

In \cite{WHS}, by applying the \textit{basic theorem} of normal families in Hermitian Geometry, Wu proved the Cartan-Carath\'eodory-Kaup-Wu theorem on a Hermitian manifold as follows.

\begin{theorem}\label{gamma} (see \cite{WHS})
Let $M$ be a complex manifold which is either an open relatively compact submanifold of a strongly negatively curved Hermitian manifold $N$, or a complete strongly negatively curved Hermitian manifold itself. Then for a holomorphic mapping $f: M \rightarrow M$ leaves a point $p$ fixed:\\
$(1)$ $|\det df_p|\leq 1$;\\
$(2)$ If $df_p:T_p^{1,0}M \rightarrow T^{1,0}_pM$ is the identity linear map, then $f$ is the identity mapping of $M \rightarrow M$.\\
$(3)$ $|\det df_p|=1$ if and only if $f$ is an automorphism of $N$.
\end{theorem}

Now, we introduce and prove the following theorem.
\begin{theorem}\label{C}
Let $M$  be a complex manifold which is either an open relatively compact submanifold of a strongly negatively curved complex Finsler manifold $(N,G=F^2)$, or a complete strongly negatively curved complex Finsler manifold itself. Then for a holomorphic mapping $f: M \rightarrow M$ leaves a point $p$ fixed:\\
$(1)$ $|\det df_p|\leq 1$;\\
$(2)$ If $df_p:T_p^{1,0}M \rightarrow T^{1,0}_pM$ is the identity linear map, then $f$ is the identity mapping of $M \rightarrow M$.\\
$(3)$ $|\det df_p|=1$ if and only if $f$ is an automorphism of $M$.
\end{theorem}
\begin{proof}
If $M$ is a complex manifold which is either an open relatively compact submanifold of a strongly negatively curved complex Finsler manifold $N$, by  Proposition \ref{P-2.1}, \ref{P-2.3}, we know that $(M,G|_M)$ is a strongly negatively curved complex Finsler manifold. Combining with Corollary \ref{tight}, $N$ is tight.  And by applying Theorem \ref{Wu-3}, the above theorem holds.

If $M$ is a complete strongly negatively curved complex Finsler manifold, by Corollary \ref{tight}, we show that $M$ is taut. And combing with Theorem \ref{Wu-3}, this theorem still holds.
\end{proof}

Now, we suppose that $D_1$ is a bounded domain in $\mathbb{C}^n(n\geq 2)$. We can find a bounded domain $D_2$ in $\mathbb{C}^m(m\geq n)$ such that $D_1 \subsetneq D_2$ and $D_1$ is relatively compact in $D_2$. By Theorem \ref{example}, $D_2$ admits a complex Finsler metric $G$ such that ist holomorphic sectional curvature is bounded from above by a negative constant. Hence $(D_2,G)$ satisfies the conditions of Theorem \ref{C}. We derive the following theorem.
\begin{theorem}(see \cite{WHS})(Cartan-Carath\'eodory-Kaup-Wu Theorem) Let $D$ be a bounded domain in $\mathbb{C}^n$. Then for a holomorphic mapping $f: M \rightarrow M$ leaves a point $p$ fixed:\\
$(1)$ $|\det df_p|\leq 1$;\\
$(2)$ If $df_p:T_p^{1,0}D \rightarrow T^{1,0}_pD$ is the identity linear map, then $f$ is the identity mapping of $D \rightarrow D$.\\
$(3)$ $|\det df_p|=1$ if and only if $f$ is an automorphism of $D$.
\end{theorem}
 However, complex manifolds in the above results are bounded domain in $\mathbb{C}^n$. By Theorem \ref{T-4.7}, we find an generalized example as follows.
 \begin{example}
Suppose that $\pi: \mathcal{X} \rightarrow S$ is an effectively parametrized holomorphic family of compact canonically polarized complex manifolds over a complex manifold $S$. And let $S_1$ be a relatively compact open submanifold of a complex manifold $S$. Then for a holomorphic mapping $f: S_1 \rightarrow S_1$ leaves a point $p$ fixed:\\
$(1)$ $|\det df_p|\leq 1$;\\
$(2)$ If $df_p:T_p^{1,0}S_1 \rightarrow T^{1,0}_pS_1$ is the identity linear map, then $f$ is the identity mapping of $S_1 \rightarrow S_1$.\\
$(3)$ $|\det df_p|=1$ if and only if $f$ is an automorphism of $S_1$.
 \end{example}
 \begin{proof}
By Theorem \ref{T-4.7}, we know that $S$ admits a complex Finsler metric $G$ such that its holomorphic sectional curvature is bounded above by a negative constant. Combining with Proposition \ref{P-2.1}, \ref{P-2.3}, $(S_1, G_{|S_1})$ is a strongly negatively curved complex Finsler manifold. By applying Theorem \ref{C}, the conclusions within the theorem are validated.
 \end{proof}
\section{Applications of Theorem \ref{basic}}
\noindent

In this section, we give some applications of Theorem \ref{basic}. In complex Finsler geometry, the volume element is very complicated. However, there is a Hermitian metric which induces a volume element on a complex manifold. During the proof of Theorems \ref{Wu-1}and \ref{Wu-2}, geometric quantities such as holomorphic curvature were not utilized. Then the Hermitian volume element is still used when we generalize Theorems \ref{Wu-1}and \ref{Wu-2} to a complex Finsler manifold. Theorems \ref{Wu-1}and \ref{Wu-2} are proved by using the basic theorem in Hermitian manifolds, i.e., Theprem \ref{T-1.1}. It is natural that we generalize Theorems \ref{Wu-1}and \ref{Wu-2} to a a complex Finsler manifold by Theorem \ref{basic}.

Firstly, we also need to establish some notations. If $N$ is a Hermitian manifold of complex dimension $n$ with the volume element of $\Omega$.
%(without further notice, dimension =  complex dimension) with Hermitian metric $h$, we shall agree to denote the volume element of $h$ by $\Omega$.
 Suppose $p_0$ is a fixed point of another complex manifold $M$ also of dimension $n$, and $\mu$ is a fixed nonzero real co-vector of degree $2n$ at $p_0$, i.e. $\mu \in \bigwedge^{2n}T_p^*M$, where $T_p^*M$ denotes the real cotangent space of $M$ at $p_0$ and $\bigwedge$ denotes exterior power. If $f \in \mathcal{H}(M,N)$, then $(f^*\Omega)_{p_0}=c\mu$ for some real number $c$. We shall denote $c$ by  $(f^*\Omega)=c\mu$ for convenience. %We now consider a holomorphic family $\mathcal{F}_a \subseteq \{ f \in \mathcal{H}(M,N): |f^*\Omega/\mu| \geq a>0\}.$ Roughly speaking, this simply means that the Jacobian of $f \in \mathcal{F}_a $ at $p_0$ is bounded away from zero by $a$ in absolute value.

\begin{theorem}\label{A}
Let $M$ be a complex manifold.  And let $N$ be a strongly negatively curved complex Finsler manifold with the volume element $\Omega$ and $\dim M=\dim N=n$. Let $\mathcal{F}_a: M \rightarrow N$ be a family of holomorphic mappings with these properties:\\
$(1)$~~At a fixed point $p_0$ of $M$, $|f^*\Omega/\mu| \geq a >0$ for all $f \in \mathcal{F}_a$ and $\mu$ is a fixed nonzero real vector at $p_0$ of degree $2n$.\\
$(2)$~~$p_0$ gets carried by each $f \in \mathcal{F}_a$ into some fixed compact set $K$ in $N$.\\
Then there is a positive constant $\alpha$ such that each $f \in \mathcal{F}_a$ possesses a univalent ball of radius $\alpha$ around $f(p_0)$.
\end{theorem}
\begin{proof}
Applying the condition that $N$ is a strongly negatively curved complex Finsler manifold to Theorem \ref{basic}, $\mathcal{F}_a$ is equicontinuous. Let $V$ be a relatively compact open neighborhood of the compact $K$ of $(2)$ in $N$ and let distance from $K$ to the complement of $V$ be $\varepsilon$. Choose a neighborhood $U$ of $p_0$ such that $p \in U$ implies $d(p,p_0) <\varepsilon$. The Cauchy integral formula implies in a standard way that $\mathcal{H}(M,N)$ is closed in $\mathcal{C}(M,N)$. And $\mathcal{F}_a$ is closed in $\mathcal{H}(M,N)$. Hence $\mathcal{F}_a$ is closed in $\mathcal{C}(M,N)$. Now if $\mathcal{F}_a$ is considered as a subset of $\mathcal{H}(M,N)$, it is relatively compactly in virtue of Lemma \ref{Ascoli}. Theorem \ref{Wu-1} applies to conclude the proof.
\end{proof}
\begin{theorem}\label{B}
Let $M$ be a Hermitian manifold.  And let $N$ be a strongly negatively curved complex Finsler manifold with the volume element $\Omega$ and $\dim M=\dim N=n$. Let $\mathcal{F}_a: M \rightarrow N$ be a family of holomorphic mappings with these properties:\\
$(1)$~~At a fixed point $p_0$ of $M$, $|f^*\Omega/\mu| \geq a >0$ for all $f \in \mathcal{F}_a$ and $\mu$ is a fixed nonzero real vector at $p_0$ of degree $2n$.\\
$(2)$~~$p_0$ gets carried by each $f \in \mathcal{F}_a$ into some fixed compact set $K$ in $N$.\\
Then there is a positive constant $\lambda$ such that each $f \in \mathcal{F}_a$ is biholomorphic on an open ball of radius $\lambda$ around $p_0$.
\end{theorem}
\begin{proof}
For Theorem \ref{B}, we also consider $\mathcal{F}_a$ as a subset of $\mathcal{H}(U,N)$ and apply Theorem \ref{Wu-1}, where $U$ is given by the proof Theorem \ref{A}.
\end{proof}
\begin{theorem}\label{D}
If a complex Finsler  manifold $N$ is strongly negatively curved, then its automorphism $H(N)$ is a (not necessarily connected) Lie group and the isotropy group of $H(N)$ at a point is a compact Lie group. If $N$ is compact, then $H(N)$ is finite. No complex Lie transformation group of positive dimension acts nontrivially on $N$.
\end{theorem}
\begin{proof}
Combining Corollary \ref{tight} with the condition  that it is a strongly negatively curved complex Finsler manifold, we obtain that $N$ is tight. By Theorem \ref{Wu-4}, the first two results are proved.  Applying the fact that the holomorphic sectional curvature has a negative constant upper bound to Theorem \ref{Wu-7}, we know that $N$ is hyperbolic. The third result is immediate from Theorem \ref{Wu-8}.
\end{proof}
\begin{remark}
By thereom \ref{example}, a bounded domain $D$ in $\mathbb{C}^n(n \geq 2)$ admits a strongly pseudoconvex complex Finsler metric $G_1$ such that $(D, G_1)$ is a strongly negatively curved. Combining with Theorem \ref{D}, we know that the automorphism $H(D)$ of $D$ is a real Lie group  (not a complex Lie group) and the isotropy group of $H(N)$ at a point is a compact Lie group. Suppose that $\pi: \mathcal{X} \rightarrow S$ is an effectively parametrized holomorphic family of compact canonically polarized complex manifolds over a complex manifold $S$. By Theorem \ref{T-4.7}, $S$ admits a $C^{\infty}$ Aut $(\pi)$-invariant Finsler metric$G_2$ whose holomorphic sectional curvature is bounded above by a negative constant. Hence $(S, G_2)$ is strongly negatively curved. Combining with Theorem \ref{D}, the automorphism $H(S)$ of $S$ is a real Lie group and not a complex Lie group.
\end{remark}

By Theorem \ref{SL} (or Theorem \ref{Wu-5} and Corollary \ref{tight}), we obtain the following theorem.
\begin{theorem}\label{E}
Every holomorphic mappings from $\mathbb{C}^n$ into strongly negatively curved complex Finsler manifold $N$ reduces to a constant.
\end{theorem}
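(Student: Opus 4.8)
The plan is to reduce the statement to the Schwarz lemma already recorded as Theorem \ref{SL}, by exhibiting $\mathbb{C}^n$ as a complete K\"ahler manifold sitting at the borderline curvature value $K_1=0$. First I would equip $\mathbb{C}^n$ with the standard Euclidean K\"ahler metric $h_0=\sum_{i=1}^n dz^i\,d\bar z^i$. This metric is flat, so its holomorphic sectional curvature vanishes identically and is in particular bounded below by $K_1=0$; its real sectional curvature likewise vanishes and is bounded below; and $(\mathbb{C}^n,h_0)$ is geodesically complete. Thus every hypothesis imposed on the source manifold in Theorem \ref{SL} is met with $K_1=0$.

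On the target side, the assumption that $(N,G=F^2)$ is strongly pseudoconvex and strongly negatively curved of order $-k_0<0$ says precisely that its holomorphic sectional curvature is bounded above by $K_2:=-k_0<0$. Hence for an arbitrary holomorphic $f:\mathbb{C}^n\to N$ the data $(\mathbb{C}^n,h_0)$ and $(N,G)$ fall exactly under the scope of Theorem \ref{SL}. Since $K_1=0\ge 0$, I would then invoke the ``in particular'' clause of that theorem, which applies verbatim and forces $f$ to be constant, establishing the claim.

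A second, less computational route is the one signposted in the theorem statement and follows Wu's original argument: by Corollary \ref{tight} the manifold $N$, being strongly negatively curved and strongly pseudoconvex, is tight, and tightness is a property of the compact-open topology on $\mathcal{H}(\,\cdot\,,N)$ that makes no reference to whether the metric on $N$ is Hermitian or genuinely Finslerian. Consequently Wu's Theorem E in \cite{WHS}, proved for an arbitrary tight target, would apply without change and again yield that every holomorphic map $\mathbb{C}^n\to N$ is constant.

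I do not expect a genuine obstacle in either route, since the analytic content has already been absorbed into Theorem \ref{SL} and Corollary \ref{tight}. The one point that will require care is that one lands in the boundary case $K_1=0$ rather than $K_1>0$: it must be confirmed that the constancy conclusion of Theorem \ref{SL} is asserted under the hypothesis $K_1\ge 0$ (it is), so that the mere flatness of $\mathbb{C}^n$—which gives equality $K_1=0$ rather than strict positivity—is still enough to kill the map. In the tautness route the analogous subtlety is verifying that Wu's notion of tightness, and hence his Theorem E, is insensitive to replacing a Hermitian target by a strongly pseudoconvex complex Finsler target; this is guaranteed because tightness is formulated purely through normal-family and equicontinuity properties, which the earlier Hopf--Rinow result (Theorem \ref{hopf}) together with Corollary \ref{tight} have already transferred to the Finsler setting.
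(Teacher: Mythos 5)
Your proposal is correct, and it actually contains the paper's own proof as its second route: the paper disposes of this theorem in one line, citing Corollary \ref{tight} (tightness of a strongly negatively curved, strongly pseudoconvex $N$) together with Wu's Theorem E for tight manifolds \cite[p.~211]{WHS}, exactly as you describe, and your observation that tightness is a metric-topological notion insensitive to the Hermitian-versus-Finsler distinction is the correct justification for importing Wu's result unchanged. Your first route, however, is genuinely different and arguably cleaner: applying Theorem \ref{SL} directly with $(\mathbb{C}^n,h_0)$ flat, so $K_1=0$, and $K_2=-k_0<0$ on the target, the estimate $f^*G\leq (K_1/K_2)\,h_0=0$ forces $G(f(z);df(v))=0$ for all $v$, hence $df\equiv 0$ by positive definiteness of the Finsler metric, and $f$ is constant on the connected manifold $\mathbb{C}^n$. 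This bypasses the whole normal-families apparatus (equicontinuity, the basic theorem, tightness) that the paper's route depends on, at the cost of invoking the full strength of the Finsler Schwarz lemma of \cite{NZ1}; the paper's route, conversely, buys a statement for any tight target but threads through more machinery. Your flagged subtlety about the boundary case $K_1=0$ is handled correctly: Theorem \ref{SL} asserts constancy under $K_1\geq 0$, and note additionally that neither route requires completeness of $N$, consistent with its absence from the statement.
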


These are immediate from Theorems \ref{D}, \ref{E}. The following consequences of Theorem \ref{E} is of particular interest.
\begin{corollary}\label{C-5.2}
$\mathbb{C}^n$ cannot be equipped with a complex Finsler metric whose holomorphic sectional curvature is bounded above by a negative constant $-K_0<0$.
\end{corollary}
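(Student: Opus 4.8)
The plan is to argue by contradiction, using the identity map as the non-constant holomorphic map forbidden by Theorem \ref{E}. Suppose, contrary to the assertion, that $\mathbb{C}^n$ can be endowed with a strongly pseudoconvex complex Finsler metric $G=F^2$ whose holomorphic sectional curvature is bounded above by $-K_1<0$. Then, by the definitions of Section 4, the pair $(\mathbb{C}^n,G)$ is precisely a strongly negatively curved and strongly pseudoconvex complex Finsler manifold (of order $-K_1$).

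Next I would exhibit a non-constant holomorphic map from $\mathbb{C}^n$ into this manifold. The natural choice is the identity map $\mathrm{id}\colon \mathbb{C}^n \to (\mathbb{C}^n,G)$, where the source carries its standard complex structure and the target carries the hypothetical Finsler metric $G$. This map is holomorphic, and for every $n\geq 1$ it is manifestly non-constant. By Theorem \ref{E}, however, every holomorphic map from $\mathbb{C}^n$ into a strongly negatively curved and strongly pseudoconvex complex Finsler manifold must reduce to a constant. This contradiction shows that no such metric $G$ can exist, which is the claim.

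Alternatively, one can bypass Theorem \ref{E} and invoke the Schwarz lemma (Theorem \ref{SL}) directly: equip the source $\mathbb{C}^n$ with the standard flat K\"ahler metric $h_0$, which is complete, has holomorphic sectional curvature identically $0$ (hence bounded below by $K_1=0$) and sectional curvature identically $0$ (hence bounded below). Applying Theorem \ref{SL} to $\mathrm{id}\colon (\mathbb{C}^n,h_0)\to(\mathbb{C}^n,G)$ with $K_2=-K_1<0$, the ``in particular'' clause ($K_1\geq 0$) forces the identity to be constant, again a contradiction.

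The argument is essentially immediate once Theorem \ref{E} (equivalently, the Schwarz lemma of Theorem \ref{SL}) is in hand, so there is no substantial obstacle. The only point requiring care is to verify that the hypotheses are genuinely met: that the flat metric on the source $\mathbb{C}^n$ satisfies the completeness and curvature-lower-bound requirements of Theorem \ref{SL}, and that the hypothetical $G$ on the target is strongly pseudoconvex with the stated curvature upper bound, so that $(\mathbb{C}^n,G)$ qualifies as a legitimate target. Checking the non-constancy of the identity map is trivial for $n\geq 1$.
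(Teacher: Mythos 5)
Your proposal is correct and matches the paper's approach: the paper derives Corollary \ref{C-5.2} as an immediate consequence of Theorem \ref{E}, and the identity-map contradiction you spell out is precisely the intended (unwritten) argument. Your alternative route via Theorem \ref{SL} is also valid, but it is essentially the same mechanism, since Theorem \ref{E} itself rests on that Schwarz lemma through Corollary \ref{tight}.
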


Combing Corollary \ref{tight} and Theorem \ref{Wu-6}, we get the following result.
\begin{theorem}\label{F}
If a domain $E$ in $\mathbb{C}^n$ can be given a complete strongly negatively curved complex Finsler metric, then it is a domain of holomorphy.
\end{theorem}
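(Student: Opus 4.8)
The plan is to reduce the statement to the tautness of $E$ and then invoke the complex-analytic content behind Wu's Theorem F. First I would apply Corollary \ref{tight}: since $E$ carries a complete strongly negatively curved and strongly pseudoconvex complex Finsler metric, the second assertion of Corollary \ref{tight} guarantees that $E$ is taut. This is the only place where the Finsler hypotheses enter the argument---strong pseudoconvexity and strong negativity of the holomorphic sectional curvature through Corollary \ref{tight}, and completeness of $d_G$ through Theorem \ref{hopf}.

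The crucial observation is that tautness is a purely complex-analytic notion: $E$ is taut precisely when the family $\mathcal{H}(\mathbb{D},E)$ of holomorphic disks is normal, i.e. every sequence admits a subsequence that either converges locally uniformly or diverges compactly. This property depends only on the complex structure of $E$, not on whether the metric realizing tautness is Hermitian or Finslerian. Consequently Wu's argument for Theorem F in \cite{WHS}, which derives the domain-of-holomorphy property from tautness alone, transfers to the present setting verbatim. Concretely, one verifies the continuity principle (Kontinuit\"atssatz): given a family of closed analytic disks whose boundaries remain in a fixed compact subset $K \subseteq E$, tautness forbids compact divergence of the associated sequence in $\mathcal{H}(\mathbb{D},E)$, since the boundaries stay trapped in $K$; hence a subsequence converges and the limiting disk remains inside $E$. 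This shows that $E$ is pseudoconvex, and by the solution of the Levi problem a pseudoconvex domain in $\mathbb{C}^n$ is a domain of holomorphy.

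The step I expect to demand the most care is confirming that Wu's proof of Theorem F uses only the tautness of $E$ and no Hermitian-specific quantity (such as a volume form or the quadratic structure of the metric). Once one checks that the sole analytic input is the normality of disk families---which Corollary \ref{tight} supplies in the Finsler category---the remainder, namely the passage from the continuity principle to pseudoconvexity and then to a domain of holomorphy, is classical and metric-free. The proof therefore condenses to the synthesis: Corollary \ref{tight} yields tautness, and tautness yields the conclusion exactly as in \cite[p. 211, Theorem F]{WHS}.
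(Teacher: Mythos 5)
Your proposal matches the paper's proof: the paper likewise obtains tautness of $E$ from Corollary \ref{tight} (using completeness together with the curvature and pseudoconvexity hypotheses) and then concludes by citing Wu's Theorem F \cite[p.~211, Theorem F]{WHS}. Your additional unpacking of Wu's theorem via the Kontinuit\"atssatz and the Levi problem is a correct elaboration of the cited result, but the logical route is the same.
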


By Theorems \ref{example}, \ref{F} and Corollary \ref{C-5.2}, we have the following corollary.
\begin{corollary}
The bounded domain in $\mathbb{C}^n(n \geq 2)$ is a domian of holomorphy. And $\mathbb{C}^n$ is not a domain of holomorphy.
\end{corollary}

{\bf Acknowledgement:}{ The author thanks Professor Chunping Zhong, Professor Tingbin Cao, Professor Zhenqi Li, Professor Chunhui Qiu, Professor Haiping Fu, Professor Bo Yang, Doctor Hongzhe Cao for their suggestions on the study of complex Finsler manifolds and several complex variables. The author thanks the referees for carefully reading the manuscript and their valuable corrections/suggestions which improved the presentation of the paper. This work was supported by National Natural Science Foundation of China (Grant No. 12071386,  No. 11971401) and the Natural Science Foundation of Jiangxi Provence in China (no. 20232ACB201005).}

\end{document}